\numberwithin{equation}{section}
\theoremstyle{plain}
\newtheorem{theorem}{Theorem}[section]
\newtheorem{corollary}[theorem]{Corollary}
\newtheorem{proposition}[theorem]{Proposition}
\theoremstyle{remark}
\newtheorem{remark}[theorem]{Remark}
\newtheorem*{ack}{Acknowledgement}
\theoremstyle{definition}
\newcommand{\R}{\mathbb{R}}
\newcommand{\C}{\mathbb{C}}
\newcommand{\N}{\mathbb{N}}
\newcommand{\cH}{\mathcal{H}}
\newcommand{\cL}{\mathcal{L}}
\newcommand{\cA}{\mathcal{A}}
\begin{document}

\title{Generalized Koch curves and Thue-Morse sequences}

\author{Yao-Qiang Li}

\address{Institut de Math\'ematiques de Jussieu - Paris Rive Gauche \\
         Sorbonne Universit\'e - Campus Pierre et Marie Curie\\
         Paris, 75005 \\
         France}

\email{yaoqiang.li@imj-prg.fr\quad yaoqiang.li@etu.upmc.fr}

\address{School of Mathematics \\
         South China University of Technology \\
         Guangzhou, 510641 \\
         P.R. China}

\email{scutyaoqiangli@gmail.com\quad scutyaoqiangli@qq.com}

\subjclass[2010]{Primary 28A80; Secondary 11B83, 11B85.}
\keywords{Koch curve, Thue-Morse sequence, morphic sequence, iterated function system, self-similar}

\begin{abstract}
Let $(t_n)_{n\ge0}$ be the well konwn $\pm1$ Thue-Morse sequence
$$+1,-1,-1,+1,-1,+1,+1,-1,\cdots.$$
Since the 1982-1983 work of Coquet and Dekking, it is known that $\sum_{k<n}t_ke^\frac{2k\pi i}{3}$ is strongly related to the famous Koch curve. As a natural generalization, for integer $m\ge1$, we use $\sum_{k<n}\delta_ke^\frac{2k\pi i}{m}$ to define generalized Koch curve, where $(\delta_n)_{n\ge0}$ is the generalized Thue-Morse sequence defined to be the unique fixed point of the morphism
$$+1\mapsto+1,+\delta_1,\cdots,+\delta_m$$
$$-1\mapsto-1,-\delta_1,\cdots,-\delta_m$$
beginning with $\delta_0=+1$ and $\delta_1,\cdots,\delta_m\in\{+1,-1\}$, and we prove that generalized Koch curves are the attractors of corresponding iterated function systems. For the case that $m\ge2$, $\delta_0=\cdots=\delta_{\lfloor\frac{m}{4}\rfloor}=+1$, $\delta_{\lfloor\frac{m}{4}\rfloor+1}=\cdots=\delta_{m-\lfloor\frac{m}{4}\rfloor-1}=-1$ and $\delta_{m-\lfloor\frac{m}{4}\rfloor}=\cdots=\delta_m=+1$, the open set condition holds, and then the corresponding generalized Koch curve has Hausdorff, packing and box dimension $\log(m+1)/\log|\sum_{k=0}^m\delta_ke^{\frac{2k\pi i}{m}}|$, where taking $m=3$ and then $\delta_0=+1,\delta_1=\delta_2=-1,\delta_3=+1$ will recover the result on the classical Koch curve.
\end{abstract}

\maketitle

\section{Introduction}

Let $\N$, $\N_0$, $\R$ and $\C$ be the sets of positive integers $1,2,3,\cdots$, non-negative integers $0,1,2,\cdots$, real numbers and complex numbers respectively. Denote the base of the natural logarithm by $e$ and the imaginary unit by $i$ as usual. Let $t=(t_n)_{n\ge0}$ be the classical $\pm1$ \textit{Thue-Morse sequence} (see \cite{AS99,AS03,T12})
$$+1,-1,-1,+1,-1,+1,+1,-1,\cdots.$$
It is well known that $t_n=(-1)^{s(n)}$ for all $n\in\N_0$ where $s(n)$ denotes the sum of binary digits of $n$. In the 1983 paper \cite{C83}, Coquet interested in the behavior of the sum $\sum_{k<n}(-1)^{s(3k)}$, introduced $\sum_{k<n}t_ke^\frac{2k\pi i}{3}$ and obtained the Koch curve \cite{V06} as a by-product in \cite[Page 111]{C83}. In addition, Dekking found in \cite[Pages 32-05 and 32-06]{D82b} that the points
$$p(0):=0,\quad p(n):=\sum_{k=0}^{n-1}t_ke^{\frac{2k\pi i}{3}}\quad (n=1,2,3,\cdots)$$
traverse the unscaled Koch curve on the complex plane (see also \cite[Page 107]{D91} and \cite[Page 304]{G17}). For more on the relation between the Koch curve and the Thue-Morse sequence, we refer the reader to \cite{AS07,MH05,Z16}.

For $m\in\N$ and $\delta_1,\cdots,\delta_m\in\{+1,-1\}$, we define the \textit{$(+1,\delta_1,\cdots,\delta_m)$-Thue-Morse sequence} $\delta=(\delta_n)_{n\ge0}$ to be the unique fixed point of the morphism
$$+1\mapsto+1,+\delta_1,\cdots,+\delta_m$$
$$-1\mapsto-1,-\delta_1,\cdots,-\delta_m$$
beginning with $\delta_0=+1$. Let
$$p_{m,\delta}(0):=0\quad\text{and}\quad p_{m,\delta}(n):=\sum_{k=0}^{n-1}\delta_ke^{\frac{2k\pi i}{m}}\quad\text{for }n=1,2,3,\cdots.$$
Noting that the classical $\pm1$ Thue-Morse sequence is not only the $(+1,-1)$ but also the $(+1,-1,-1,+1)$-Thue-Morse sequence in our terms, the above $p_{m,\delta}$ depends not only on $\delta$ but also on $m$. For $n\in\N_0$, let
$$P_{m,\delta}(n):=\bigcup_{k=1}^{(m+1)^n}[p_{m,\delta}(k-1),p_{m,\delta}(k)]$$
be the polygonal line connecting the points $p_{m,\delta}(0),p_{m,\delta}(1),\cdots,p_{m,\delta}((m+1)^n)$ one by one, where $[z_1,z_2]:=\{cz_1+(1-c)z_2:c\in[0,1]\}$ is the segment connecting $z_1$ and $z_2$ on the complex plane $\C$. In addition, if $p_{m,\delta}(m+1)\neq0$, for all $j\in\{0,1,\cdots,m\}$, we define $S_{m,\delta,j}:\C\to\C$ by
$$S_{m,\delta,j}(z):=\frac{p_{m,\delta}(j)+\delta_je^\frac{2j\pi i}{m}z}{p_{m,\delta}(m+1)}\quad\text{for }z\in\C.$$
When $|p_{m,\delta}(m+1)|>1$, obviously $S_{m,\delta,0},S_{m,\delta,1},\cdots,S_{m,\delta,m}$ are all contracting similarities, and we call $\{S_{m,\delta,j}\}_{0\le j\le m}$ the \textit{$(+1,\delta_1,\cdots,\delta_m)$-IFS} (\textit{iterated function system}). We can see that the attractor of the $(+1,-1,-1,+1)$-IFS is exactly the Koch curve.

For simplification, if $m$ and the $(+1,\delta_1,\cdots,\delta_m)$-Thue-Morse sequence $\delta$ are understood from the context, we use $p$, $P$ and $S_j$ instead of $p_{m,\delta}$, $P_{m,\delta}$ and $S_{m,\delta,j}$ respectively.

Let $d_H$ be the Hausdorff metric and write $cZ:=\{cz:z\in Z\}$ for any $c\in\C$ and $Z\subset\C$. The following is our main result.

\begin{theorem}\label{main} Let $m\in\N$, $\delta_0=+1$, $\delta_1,\cdots,\delta_m\in\{+1,-1\}$ and $\delta=(\delta_n)_{n\ge0}$ be the $(+1,\delta_1,\cdots,\delta_m)$-Thue-Morse sequence. If $|p(m+1)|>1$, then there exists a unique compact set $K\subset\C$ such that
$$(p(m+1))^{-n}P(n)\overset{d_H}{\longrightarrow}K\quad\text{as }n\to\infty,$$
and $K$ is a continuous image of $[0,1]$. Moreover, $K$ is the unique attractor of the $(+1,\delta_1,\cdots,\delta_m)$-IFS $\{S_j\}_{0\le j\le m}$. That is, $K$ is the unique non-empty compact set such that
$$K=\bigcup_{j=0}^m S_j(K).$$
Furthermore,
$$\dim_HK=\frac{\log(m+1)}{\log|p(m+1)|}$$
if and only if there exists $\varepsilon>0$ such that
$$\varliminf_{n\to\infty}\frac{\cL((P(n))^\varepsilon)}{(m+1)^n}>0,$$
where $\cL$ is the Lebesgue measure on the plane and $A^\varepsilon:=\{z\in\C:|z-a|<\varepsilon\text{ for some }a\in A\}$ for $A\subset\C$.
\end{theorem}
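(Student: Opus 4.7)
The approach rests on the arithmetic self-similarity built into $\delta$'s defining morphism. Since $\delta$ is the fixed point of a uniform morphism of length $m+1$, a short induction on $n$ yields
\[
\delta_{k(m+1)^n+r}=\delta_k\,\delta_r\qquad(k\ge 0,\ 0\le r<(m+1)^n).
\]
Coupling this with the congruence $(m+1)^n\equiv 1\pmod m$ gives the fundamental identity
\[
p\bigl(k(m+1)^n+r\bigr)=p(k)\,p(m+1)^n+\delta_k\,e^{\frac{2k\pi i}{m}}\,p(r)\qquad(0\le r\le (m+1)^n),
\]
and, applied on each block, the refinement
\[
P(n+1)=\bigcup_{j=0}^{m}\Bigl(p(j)\,p(m+1)^n+\delta_j\,e^{\frac{2j\pi i}{m}}\,P(n)\Bigr).
\]
Dividing by $p(m+1)^{n+1}$ rewrites this as $\widehat P(n+1)=\bigcup_j S_j(\widehat P(n))$, where $\widehat P(n):=p(m+1)^{-n}P(n)$. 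Since $|p(m+1)|>1$, each $S_j$ is a contractive similarity of ratio $1/|p(m+1)|$, so the Hutchinson operator $F(A):=\bigcup_j S_j(A)$ is a strict contraction on the space of non-empty compact subsets of $\C$ under $d_H$. Banach's fixed-point theorem then delivers the first two assertions at once: $\widehat P(n)=F^n(\widehat P(0))$ converges in $d_H$ to the unique non-empty compact fixed point $K$ of $F$, equivalently to the unique attractor of $\{S_j\}_{0\le j\le m}$.

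For the surjection $[0,1]\twoheadrightarrow K$, I introduce the piecewise-affine parametrization $f_n:[0,1]\to\C$ with $f_n(k/(m+1)^n)=p(k)/p(m+1)^n$ for $0\le k\le(m+1)^n$, extended affinely on each sub-interval of length $(m+1)^{-n}$. The refinement identity gives $f_{n+1}=f_n$ at every grid point of level $n$, while on each such sub-interval the image of $f_{n+1}$ is a similar copy of $P(1)$ of diameter $\lesssim|p(m+1)|^{-(n+1)}$. A triangle inequality then yields $\|f_{n+1}-f_n\|_\infty\lesssim|p(m+1)|^{-n}$, a summable bound, so the $f_n$ converge uniformly to a continuous $f:[0,1]\to\C$; since $f_n([0,1])=\widehat P(n)\to K$ in $d_H$, the image $f([0,1])$ equals $K$.

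For the dimension statement, the upper bound $\dim_HK\le s:=\log(m+1)/\log|p(m+1)|$ is immediate from the IFS, which covers $K$ by the $(m+1)^n$ cylinders $S_{i_1}\circ\cdots\circ S_{i_n}(K)$ of diameter $\asymp|p(m+1)|^{-n}$. To handle the equivalence I would first translate the Lebesgue hypothesis into a geometric statement about $K$. Using $(m+1)=|p(m+1)|^s$, the scaling identity $\cL((P(n))^\varepsilon)=|p(m+1)|^{2n}\,\cL((\widehat P(n))^{\rho_n})$ with $\rho_n:=\varepsilon/|p(m+1)|^n$, and the Banach estimate $d_H(\widehat P(n),K)\lesssim|p(m+1)|^{-n}$, a short computation gives
\[
\frac{\cL((P(n))^\varepsilon)}{(m+1)^n}\asymp\varepsilon^{2-s}\cdot\frac{\cL(K^{\rho_n})}{\rho_n^{\,2-s}},
\]
so the hypothesis is equivalent to positivity of the lower $s$-Minkowski content of $K$ along the geometric scale $\rho_n\to 0$, and by monotonicity of $\cL(K^\rho)$ to $\liminf_{\rho\to 0}\cL(K^\rho)/\rho^{2-s}>0$. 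The direction $(\Leftarrow)$ then follows by a mass-distribution argument: letting $\mu$ be the natural self-similar probability on $K$ that assigns mass $(m+1)^{-n}$ to each $n$-th cylinder, positivity of $\cL(K^{\rho_n})/\rho_n^{2-s}$ forces the average covering multiplicity of the $n$-th cylinders at scale $\rho_n$ to be uniformly bounded, which (after a standard differentiation of $\mu$) gives the Frostman-type estimate $\mu(B(x,r))\lesssim r^s$ and hence $\dim_HK\ge s$. The reverse direction is the principal obstacle: one must show that if the liminf vanishes for every $\varepsilon>0$ then $\dim_HK<s$. I would argue contrapositively and exploit the self-similar structure of the $S_j$, passing from the unbounded average cylinder-multiplicity (which the failure of the liminf condition provides) to a sharp Frostman-type \emph{upper} bound on every probability measure supported on $K$, strong enough to push $\dim_H$ strictly below $s$. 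Making this passage from averaged overlap data to pointwise upper Frostman bounds quantitatively sharp---so as to control $\dim_H$, and not merely $\dim_B$---is the delicate technical heart of the proof and where I expect the bulk of the work to concentrate.
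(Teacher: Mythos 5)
Your treatment of the first two assertions is correct and follows a genuinely different, more self-contained route than the paper. The paper encodes $P(n)$ as a recurrent set $K(\phi^n(0))$ for an auxiliary morphism $\phi$ on $\Z/2m\Z$ (resp.\ $\Z/m\Z$) and invokes Dekking's Theorem 2.4 of \cite{D82a} to get both the convergence and the continuous parametrization; it only afterwards derives the refinement identity to identify the limit with the attractor. You instead prove the refinement identity $\widehat P(n+1)=\bigcup_j S_j(\widehat P(n))$ directly from $\delta_{k(m+1)^n+r}=\delta_k\delta_r$ and $(m+1)^n\equiv1\pmod m$ (this is exactly the computation the paper does at the end of its step (1)\textcircled{\scriptsize{1}}), and then get convergence and uniqueness from Banach's fixed point theorem for the Hutchinson operator, with the continuous surjection $[0,1]\to K$ built by hand from the uniformly convergent piecewise-affine maps $f_n$. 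All of this is sound, and it buys you independence from \cite{D82a} for these two claims.

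The third assertion is where your proposal has a genuine gap, and you say so yourself. The paper does not prove the equivalence between $\dim_HK=\log(m+1)/\log|p(m+1)|$ and the liminf condition; it quotes it as Bedford's proof of Dekking's conjecture in \cite{B86}, whose hypotheses are that the morphism $\phi$ is primitive and that the Perron eigenvalue of $M_\phi$ equals $m+1$. The real work in the paper's step (3) is the verification of primitivity of $\phi$ (a case analysis over $m$ odd/even and $\delta_1=\pm1$, using the circulant structure of $M_\phi$), none of which appears in your proposal because you never set up the morphism framework. What you attempt instead is to reprove Bedford's theorem from scratch, and the attempt is incomplete in both directions. For $(\Leftarrow)$, your reduction to positive lower $s$-Minkowski content is not by itself enough (positive lower Minkowski content bounds box dimension from below, not Hausdorff dimension), and the step from ``bounded \emph{average} covering multiplicity of cylinders'' to the \emph{pointwise} Frostman bound $\mu(B(x,r))\lesssim r^s$ is precisely the nontrivial content of \cite{B86}; it does not follow from ``standard differentiation.'' For $(\Rightarrow)$ you explicitly concede that the passage from unbounded average overlap to an upper Frostman bound strong enough to force $\dim_HK<s$ is unproved and is ``where the bulk of the work would concentrate.'' As it stands, the dimension equivalence is asserted, not proved; to close the gap you should either carry out that argument in full or, as the paper does, cite \cite{B86} and supply the primitivity verification its hypotheses require.
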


We call $K$ in Theorem \ref{main} the \textit{$(+1,\delta_1,\cdots,\delta_m)$-Koch curve}. See the following figures for some examples for $m=3$ and $4$. Note that the classical Koch curve is exactly the $(+1,-1,-1,+1)$-Koch curve in our terms.

\newpage

\newcolumntype{F}{>{$}c<{$}}
$$
\begin{tabular}{FF}
\begin{tikzpicture}[scale = 5.5/3]
\draw node[above]at(0,0){$0$};
\draw node[above]at(3,0){$1$};
\draw[dashed,scale=3](0,0)--(1,0);
\draw[-,line width=0.7](0,0)--(1,0)--(1.5,0-0.866)--(2,0)--(3,0);
\end{tikzpicture}
&\begin{tikzpicture}[scale = 5.5/2.645751311064591,rotate=-40.89339465]
\draw node[above]at(0,0){$0$};
\draw node[below]at(2,1.732){$1$};
\draw[dashed,scale=2.645751311064591,rotate=40.89339465](0,0)--(1,0);
\draw[-,line width=0.7](0,0)--(1,0)--(0.5,0.866)--(1,1.732)--(2,1.732);
\end{tikzpicture}
\\
\begin{tikzpicture}[scale = 5.5/9]
\draw node[above]at(0,0){$0$};
\draw node[above]at(9,0){$1$};
\draw[dashed,scale=9/3](0,0)--(1,0)--(1.5,0-0.866)--(2,0)--(3,0);
\draw[-,line width=0.7](0,0)--(1,0)--(1.5,0-0.866)--(2,0)--(3,0)--(3.5,0-0.866)--(3,0-0.866*2)--(4,0-0.866*2)--(4.5,0-0.866*3);
\draw[-,line width=0.7](9-0,0)--(9-1,0)--(9-1.5,0-0.866)--(9-2,0)--(9-3,0)--(9-3.5,0-0.866)--(9-3,0-0.866*2)--(9-4,0-0.866*2)--(9-4.5,0-0.866*3);
\end{tikzpicture}
&
\begin{tikzpicture}[scale = 5.5/7,rotate=-81.786789298262]
\draw node[above]at(0,0){$0$};
\draw node[below]at(1,8*0.866){$1$};
\draw[dashed,scale=7/2.645751311064591,rotate=81.786789298262-40.89339465](0,0)--(1,0)--(0.5,0.866)--(1,1.732)--(2,1.732);
\draw[-,line width=0.7](0,0)--(1,0)--(0.5,0.866)--(1,1.732)--(2,1.732)--(1.5,0.866*3)--(1,1.732)--(0,1.732)--(-0.5,3*0.866)--(0,4*0.866) --(-1,4*0.866)--(-1.5,5*0.866)--(-1,6*0.866)--(0,6*0.866)--(-0.5,7*0.866)--(0,8*0.866)--(1,8*0.866);
\end{tikzpicture}
\\
\begin{tikzpicture}[scale = 5.5/27]
\draw node[above]at(0,0){$0$};
\draw node[above]at(27,0){$1$};
\draw[dashed,scale=27/9](0,0)--(1,0)--(1.5,0-0.866)--(2,0)--(3,0)--(3.5,0-0.866)--(3,0-0.866*2)--(4,0-0.866*2)--(4.5,0-0.866*3);
\draw[dashed,scale=27/9](9-0,0)--(9-1,0)--(9-1.5,0-0.866)--(9-2,0)--(9-3,0)--(9-3.5,0-0.866)--(9-3,0-0.866*2)--(9-4,0-0.866*2)--(9-4.5,0-0.866*3);
\draw[-,line width=0.7](0,0)--(1,0)--(1.5,0-0.866)--(2,0)--(3,0)--(3.5,0-0.866)--(3,0-0.866*2)--(4,0-0.866*2)--(4.5,0-0.866*3);
\draw[-,line width=0.7](9-0,0)--(9-1,0)--(9-1.5,0-0.866)--(9-2,0)--(9-3,0)--(9-3.5,0-0.866)--(9-3,0-0.866*2)--(9-4,0-0.866*2)--(9-4.5,0-0.866*3);
\draw[-,line width=0.7,xshift=9cm,rotate=-60](0,0)--(1,0)--(1.5,0-0.866)--(2,0)--(3,0)--(3.5,0-0.866)--(3,0-0.866*2)--(4,0-0.866*2)--(4.5,0-0.866*3);
\draw[-,line width=0.7,xshift=9cm,rotate=-60](9-0,0)--(9-1,0)--(9-1.5,0-0.866)--(9-2,0)--(9-3,0)--(9-3.5,0-0.866)--(9-3,0-0.866*2)--(9-4,0-0.866*2)--(9-4.5,0-0.866*3);
\draw[-,line width=0.7,xshift=13.5cm,yshift=-7.794228634059948cm,rotate=60](0,0)--(1,0)--(1.5,0-0.866)--(2,0)--(3,0)--(3.5,0-0.866)--(3,0-0.866*2)--(4,0-0.866*2)--(4.5,0-0.866*3);
\draw[-,line width=0.7,xshift=13.5cm,yshift=-7.794228634059948cm,rotate=60](9-0,0)--(9-1,0)--(9-1.5,0-0.866)--(9-2,0)--(9-3,0)--(9-3.5,0-0.866)--(9-3,0-0.866*2)--(9-4,0-0.866*2)--(9-4.5,0-0.866*3);
\draw[-,line width=0.7,xshift=18cm](0,0)--(1,0)--(1.5,0-0.866)--(2,0)--(3,0)--(3.5,0-0.866)--(3,0-0.866*2)--(4,0-0.866*2)--(4.5,0-0.866*3);
\draw[-,line width=0.7,xshift=18cm](9-0,0)--(9-1,0)--(9-1.5,0-0.866)--(9-2,0)--(9-3,0)--(9-3.5,0-0.866)--(9-3,0-0.866*2)--(9-4,0-0.866*2)--(9-4.5,0-0.866*3);
\end{tikzpicture}&
\begin{tikzpicture}[scale = 5.5/18.52025917745213,rotate=-122.6801839473927]
\draw node[above]at(0,0){$0$};
\draw node[below]at(-10,18*0.866){$1$};
\draw[dashed,scale=18.52025917745213/7,rotate=122.6801839473927-81.786789298262](0,0)--(1,0)--(0.5,0.866)--(1,1.732)--(2,1.732)--(1.5,0.866*3)--(1,1.732)--(0,1.732)--(-0.5,3*0.866)--(0,4*0.866) --(-1,4*0.866)--(-1.5,5*0.866)--(-1,6*0.866)--(0,6*0.866)--(-0.5,7*0.866)--(0,8*0.866)--(1,8*0.866);
\draw[-,line width=0.7](0,0)--(1,0)--(0.5,0.866)--(1,1.732)--(2,1.732)--(1.5,0.866*3)--(1,1.732)--(0,1.732)--(-0.5,3*0.866)--(0,4*0.866) --(-1,4*0.866)--(-1.5,5*0.866)--(-1,6*0.866)--(0,6*0.866)--(-0.5,7*0.866)--(0,8*0.866)--(1,8*0.866)--(0.5,9*0.866) --(0,8*0.866)--(-1,8*0.866)--(-1.5,9*0.866)--(-2,8*0.866)--(-1,8*0.866)--(-0.5,7*0.866)--(-1,6*0.866)--(-2,6*0.866)--(-1.5,5*0.866)--(-2,4*0.866)--(-3,4*0.866)--(-3.5,5*0.866) --(-4,4*0.866)--(-5,4*0.866)--(-5.5,5*0.866)--(-5,6*0.866)--(-6,6*0.866)--(-6.5,7*0.866)--(-6,8*0.866)--(-7,8*0.866)--(-6.5,7*0.866)--(-7,6*0.866)--(-8,6*0.866)--(-8.5,7*0.866)--(-9,6*0.866) --(-10,6*0.866)--(-10.5,7*0.866)--(-10,8*0.866)--(-11,8*0.866)--(-11.5,9*0.866)--(-11,10*0.866)--(-10,10*0.866)--(-10.5,11*0.866)--(-10,12*0.866)--(-9,12*0.866)--(-9.5,13*0.866) --(-10,12*0.866)--(-11,12*0.866)--(-11.5,13*0.866)--(-11,14*0.866)--(-12,14*0.866)--(-12.5,15*0.866)--(-12,16*0.866)--(-11,16*0.866)--(-11.5,17*0.866)--(-11,18*0.866)--(-10,18*0.866);
\end{tikzpicture}
\\
\vdots
&
\vdots
\\
&
\\
\quad\quad\text{\begin{tiny}Construction of the $(+1,-1,-1,+1)$-Koch curve.\end{tiny}}\quad\quad
&
\quad\quad\text{\begin{tiny}Construction of the $(+1,+1,-1,+1)$-Koch curve.\end{tiny}}\quad\quad
\\
&
\\
&
\\
\begin{tikzpicture}[scale = 2.25/1.732]
\draw node[above]at(0,0){$0$};
\draw node[above]at(1.732,0){$1$};
\draw[dashed,scale=1.732](0,0)--(1,0);
\draw[-,line width=0.7](0,0)--(0,-1)--(0.866,-0.5)--(0.866*2,-1)--(0.866*2,0);
\end{tikzpicture}
&
\begin{tikzpicture}[scale = 2.25/1.732]
\draw node[below]at(0,0){$0$};
\draw node[below]at(1.732,0){$1$};
\draw[dashed,scale=1.732](0,0)--(1,0);
\draw[-,line width=0.7](0,0)--(0,1)--(0.866,1.5)--(1.732,1)--(1.732,0);
\end{tikzpicture}
\\
\begin{tikzpicture}[scale = 2.25/3]
\draw node[above]at(0,0){$0$};
\draw node[above]at(3,0){$1$};
\draw[dashed,scale=3/1.732](0,0)--(0,-1)--(0.866,-0.5)--(0.866*2,-1)--(0.866*2,0);
\draw[-,line width=0.7](0,0)--(-1,0)--(-0.5,-0.866)--(-1,-0.866*2)--(0,-0.866*2)--(0.5,-0.866*3)--(1.5,-0.866);
\draw[-,line width=0.7](1,-0.866*2)--(2,-0.866*2);
\draw[-,line width=0.7](3-0,0)--(3+1,0)--(3+0.5,-0.866)--(3+1,-0.866*2)--(3,-0.866*2)--(3-0.5,-0.866*3)--(1.5,-0.866);
\end{tikzpicture}
&
\begin{tikzpicture}[scale = 2.25/3]
\draw node[below]at(0,0){$0$};
\draw node[below]at(3,0){$1$};
\draw[dashed,scale=3/1.732](0,0)--(0,1)--(0.866,1.5)--(1.732,1)--(1.732,0);
\draw[-,line width=0.7](0,0)--(-1,0)--(-1.5,0.866)--(-1,1.732)--(0,1.732)--(-0.5,2.598)--(0,3.464)--(1,3.464)--(1.5,2.598);
\draw[-,line width=0.7](3,0)--(4,0)--(4.5,0.866)--(4,1.732)--(3,1.732)--(3.5,2.598)--(3,3.464)--(2,3.464)--(1.5,2.598);
\end{tikzpicture}
\\
\begin{tikzpicture}[scale = 2.25/5.196]
\draw node[right]at(0,0){$0$};
\draw node[left]at(5.196,0){$1$};
\draw[dashed,scale=5.196/3](0,0)--(-1,0)--(-0.5,-0.866)--(-1,-0.866*2)--(0,-0.866*2)--(0.5,-0.866*3)--(1.5,-0.866);
\draw[dashed,scale=5.196/3](1,-0.866*2)--(2,-0.866*2);
\draw[dashed,scale=5.196/3](3-0,0)--(3+1,0)--(3+0.5,-0.866)--(3+1,-0.866*2)--(3,-0.866*2)--(3-0.5,-0.866*3)--(1.5,-0.866);
\draw[-,line width=0.7,rotate=-90](0,0)--(-1,0)--(-0.5,-0.866)--(-1,-0.866*2)--(0,-0.866*2)--(0.5,-0.866*3)--(1.5,-0.866);
\draw[-,line width=0.7,rotate=-90](1,-0.866*2)--(2,-0.866*2);
\draw[-,line width=0.7,rotate=-90](3-0,0)--(3+1,0)--(3+0.5,-0.866)--(3+1,-0.866*2)--(3,-0.866*2)--(3-0.5,-0.866*3)--(1.5,-0.866);
\draw[-,line width=0.7,yshift=-3cm,rotate=30](0,0)--(-1,0)--(-0.5,-0.866)--(-1,-0.866*2)--(0,-0.866*2)--(0.5,-0.866*3)--(1.5,-0.866);
\draw[-,line width=0.7,yshift=-3cm,rotate=30](1,-0.866*2)--(2,-0.866*2);
\draw[-,line width=0.7,yshift=-3cm,rotate=30](3-0,0)--(3+1,0)--(3+0.5,-0.866)--(3+1,-0.866*2)--(3,-0.866*2)--(3-0.5,-0.866*3)--(1.5,-0.866);
\draw[-,line width=0.7,xshift=2.598cm,yshift=-1.5cm,rotate=-30](0,0)--(-1,0)--(-0.5,-0.866)--(-1,-0.866*2)--(0,-0.866*2)--(0.5,-0.866*3)--(1.5,-0.866);
\draw[-,line width=0.7,xshift=2.598cm,yshift=-1.5cm,rotate=-30](1,-0.866*2)--(2,-0.866*2);
\draw[-,line width=0.7,xshift=2.598cm,yshift=-1.5cm,rotate=-30](3-0,0)--(3+1,0)--(3+0.5,-0.866)--(3+1,-0.866*2)--(3,-0.866*2)--(3-0.5,-0.866*3)--(1.5,-0.866);
\draw[-,line width=0.7,xshift=5.196cm,yshift=-3cm,rotate=90](0,0)--(-1,0)--(-0.5,-0.866)--(-1,-0.866*2)--(0,-0.866*2)--(0.5,-0.866*3)--(1.5,-0.866);
\draw[-,line width=0.7,xshift=5.196cm,yshift=-3cm,rotate=90](1,-0.866*2)--(2,-0.866*2);
\draw[-,line width=0.7,xshift=5.196cm,yshift=-3cm,rotate=90](3-0,0)--(3+1,0)--(3+0.5,-0.866)--(3+1,-0.866*2)--(3,-0.866*2)--(3-0.5,-0.866*3)--(1.5,-0.866);
\end{tikzpicture}
&
\begin{tikzpicture}[scale = 2.25/5.196]
\draw node[right]at(0,0){$0$};
\draw node[left]at(5.196,0){$1$};
\draw[dashed,scale=5.196/3](0,0)--(-1,0)--(-1.5,0.866)--(-1,1.732)--(0,1.732)--(-0.5,2.598)--(0,3.464)--(1,3.464)--(1.5,2.598);
\draw[dashed,scale=5.196/3](3,0)--(4,0)--(4.5,0.866)--(4,1.732)--(3,1.732)--(3.5,2.598)--(3,3.464)--(2,3.464)--(1.5,2.598);
\draw[-,line width=0.7,rotate=90](0,0)--(-1,0)--(-1.5,0.866)--(-1,1.732)--(0,1.732)--(-0.5,2.598)--(0,3.464)--(1,3.464)--(1.5,2.598);
\draw[-,line width=0.7,rotate=90](3,0)--(4,0)--(4.5,0.866)--(4,1.732)--(3,1.732)--(3.5,2.598)--(3,3.464)--(2,3.464)--(1.5,2.598);
\draw[-,line width=0.7,yshift=3cm,rotate=30](0,0)--(-1,0)--(-1.5,0.866)--(-1,1.732)--(0,1.732)--(-0.5,2.598)--(0,3.464)--(1,3.464)--(1.5,2.598);
\draw[-,line width=0.7,yshift=3cm,rotate=30](3,0)--(4,0)--(4.5,0.866)--(4,1.732)--(3,1.732)--(3.5,2.598)--(3,3.464)--(2,3.464)--(1.5,2.598);
\draw[-,line width=0.7,xshift=2.598cm,yshift=4.5cm,rotate=-30](0,0)--(-1,0)--(-1.5,0.866)--(-1,1.732)--(0,1.732)--(-0.5,2.598)--(0,3.464)--(1,3.464)--(1.5,2.598);
\draw[-,line width=0.7,xshift=2.598cm,yshift=4.5cm,rotate=-30](3,0)--(4,0)--(4.5,0.866)--(4,1.732)--(3,1.732)--(3.5,2.598)--(3,3.464)--(2,3.464)--(1.5,2.598);
\draw[-,line width=0.7,xshift=5.196cm,yshift=3cm,rotate=-90](0,0)--(-1,0)--(-1.5,0.866)--(-1,1.732)--(0,1.732)--(-0.5,2.598)--(0,3.464)--(1,3.464)--(1.5,2.598);
\draw[-,line width=0.7,xshift=5.196cm,yshift=3cm,rotate=-90](3,0)--(4,0)--(4.5,0.866)--(4,1.732)--(3,1.732)--(3.5,2.598)--(3,3.464)--(2,3.464)--(1.5,2.598);
\end{tikzpicture}
\\
\vdots
&
\vdots
\\
&
\\
\quad\quad\text{\begin{tiny}Construction of the $(+1,+1,-1,-1)$-Koch curve.\end{tiny}}\quad\quad
&
\quad\quad\text{\begin{tiny}Construction of the $(+1,-1,+1,-1)$-Koch curve.\end{tiny}}\quad\quad
\end{tabular}
$$

\newcolumntype{F}{>{$}c<{$}}
$$
\begin{tabular}{FF}
\begin{tikzpicture}[scale = 6/3]
\draw node[below]at(0,0){$0$};
\draw node[below]at(3,0){$1$};
\draw[dashed,scale=3](0,0)--(1,0);
\draw[-,line width=0.7](0,0)--(1,0)--(1,1)--(2,1)--(2,0)--(3,0);
\end{tikzpicture}
&
\begin{tikzpicture}[scale = 6/3.60555127546399,rotate=-33.69006752597977083497]
\draw node[above]at(0,0){$0$};
\draw node[below]at(3,2){$1$};
\draw[dashed,scale=3.60555127546399,rotate=33.69006752597977083497](0,0)--(1,0);
\draw[-,line width=0.7](0,0)--(1,0)--(1,1)--(2,1)--(2,2)--(3,2);
\end{tikzpicture}
\\
\begin{tikzpicture}[scale = 6/9]
\draw node[below]at(0,0){$0$};
\draw node[below]at(9,0){$1$};
\draw[dashed,scale=9/3](0,0)--(1,0)--(1,1)--(2,1)--(2,0)--(3,0);
\draw[-,line width=0.7](0,0)--(1,0)--(1,1)--(2,1)--(2,0)--(3,0)--(3,1)--(2,1)--(2,2)--(3,2)--(3,3)--(4,3)--(4,4)--(5,4)--(5,3)--(6,3)--(6,2)--(7,2)--(7,1)--(6,1)--(6,0)--(7,0)--(7,1)--(8,1)--(8,0)--(9,0);
\end{tikzpicture}
&
\begin{tikzpicture}[scale = 6/13,rotate=-2*33.69006752597977083497]
\draw node[above]at(0,0){$0$};
\draw node[below]at(5,12){$1$};
\draw[dashed,scale=13/3.60555127546399,rotate=+33.69006752597977083497](0,0)--(1,0)--(1,1)--(2,1)--(2,2)--(3,2);
\draw[-,line width=0.7](0,0)--(1,0)--(1,1)--(2,1)--(2,2)--(3,2)--(3,3)--(2,3)--(2,4)--(1,4)--(1,5)--(2,5)--(2,6)--(3,6)--(3,7)--(4,7)--(4,8)--(3,8)--(3,9)--(2,9)--(2,10)--(3,10)--(3,11)--(4,11)--(4,12)--(5,12);
\end{tikzpicture}
\\
\begin{tikzpicture}[scale = 6/27]
\draw node[below]at(0,0){$0$};
\draw node[below]at(27,0){$1$};
\draw[dashed,scale=27/9](0,0)--(1,0)--(1,1)--(2,1)--(2,0)--(3,0)--(3,1)--(2,1)--(2,2)--(3,2)--(3,3)--(4,3)--(4,4)--(5,4)--(5,3)--(6,3)--(6,2)--(7,2)--(7,1)--(6,1)--(6,0)--(7,0)--(7,1)--(8,1)--(8,0)--(9,0);
\draw[-,line width=0.7](0,0)--(1,0)--(1,1)--(2,1)--(2,0)--(3,0)--(3,1)--(2,1)--(2,2)--(3,2)--(3,3)--(4,3)--(4,4)--(5,4)--(5,3)--(6,3)--(6,2)--(7,2)--(7,1)--(6,1)--(6,0)--(7,0)--(7,1)--(8,1)--(8,0)--(9,0);
\draw[-,line width=0.7,xshift=9cm,rotate=90](0,0)--(1,0)--(1,1)--(2,1)--(2,0)--(3,0)--(3,1)--(2,1)--(2,2)--(3,2)--(3,3)--(4,3)--(4,4)--(5,4)--(5,3)--(6,3)--(6,2)--(7,2)--(7,1)--(6,1)--(6,0)--(7,0)--(7,1)--(8,1)--(8,0)--(9,0);
\draw[-,line width=0.7,xshift=9cm,yshift=9cm](0,0)--(1,0)--(1,1)--(2,1)--(2,0)--(3,0)--(3,1)--(2,1)--(2,2)--(3,2)--(3,3)--(4,3)--(4,4)--(5,4)--(5,3)--(6,3)--(6,2)--(7,2)--(7,1)--(6,1)--(6,0)--(7,0)--(7,1)--(8,1)--(8,0)--(9,0);
\draw[-,line width=0.7,xshift=18cm,yshift=9cm,rotate=-90](0,0)--(1,0)--(1,1)--(2,1)--(2,0)--(3,0)--(3,1)--(2,1)--(2,2)--(3,2)--(3,3)--(4,3)--(4,4)--(5,4)--(5,3)--(6,3)--(6,2)--(7,2)--(7,1)--(6,1)--(6,0)--(7,0)--(7,1)--(8,1)--(8,0)--(9,0);
\draw[-,line width=0.7,xshift=18cm](0,0)--(1,0)--(1,1)--(2,1)--(2,0)--(3,0)--(3,1)--(2,1)--(2,2)--(3,2)--(3,3)--(4,3)--(4,4)--(5,4)--(5,3)--(6,3)--(6,2)--(7,2)--(7,1)--(6,1)--(6,0)--(7,0)--(7,1)--(8,1)--(8,0)--(9,0);
\end{tikzpicture}
&
\begin{tikzpicture}[scale =6/46.87216658103186,rotate=-33.69006752597977083497]
\draw node[above]at(0,0){$0$};
\draw node[below]at(39,26){$1$};
\draw[dashed,scale=46.87216658103186/13,rotate=-33.69006752597977083497](0,0)--(1,0)--(1,1)--(2,1)--(2,2)--(3,2)--(3,3)--(2,3)--(2,4)--(1,4)--(1,5)--(2,5)--(2,6)--(3,6)--(3,7)--(4,7)--(4,8)--(3,8)--(3,9)--(2,9)--(2,10)--(3,10)--(3,11)--(4,11)--(4,12)--(5,12);
\draw[-,line width=0.7,rotate=-2*33.69006752597977083497](0,0)--(1,0)--(1,1)--(2,1)--(2,2)--(3,2)--(3,3)--(2,3)--(2,4)--(1,4)--(1,5)--(2,5)--(2,6)--(3,6)--(3,7)--(4,7)--(4,8)--(3,8)--(3,9)--(2,9)--(2,10)--(3,10)--(3,11)--(4,11)--(4,12)--(5,12);
\draw[-,line width=0.7,xshift=13cm,rotate=90-2*33.69006752597977083497](0,0)--(1,0)--(1,1)--(2,1)--(2,2)--(3,2)--(3,3)--(2,3)--(2,4)--(1,4)--(1,5)--(2,5)--(2,6)--(3,6)--(3,7)--(4,7)--(4,8)--(3,8)--(3,9)--(2,9)--(2,10)--(3,10)--(3,11)--(4,11)--(4,12)--(5,12);
\draw[-,line width=0.7,xshift=13cm,yshift=13cm,rotate=-2*33.69006752597977083497](0,0)--(1,0)--(1,1)--(2,1)--(2,2)--(3,2)--(3,3)--(2,3)--(2,4)--(1,4)--(1,5)--(2,5)--(2,6)--(3,6)--(3,7)--(4,7)--(4,8)--(3,8)--(3,9)--(2,9)--(2,10)--(3,10)--(3,11)--(4,11)--(4,12)--(5,12);
\draw[-,line width=0.7,xshift=26cm,yshift=13cm,rotate=90-2*33.69006752597977083497](0,0)--(1,0)--(1,1)--(2,1)--(2,2)--(3,2)--(3,3)--(2,3)--(2,4)--(1,4)--(1,5)--(2,5)--(2,6)--(3,6)--(3,7)--(4,7)--(4,8)--(3,8)--(3,9)--(2,9)--(2,10)--(3,10)--(3,11)--(4,11)--(4,12)--(5,12);
\draw[-,line width=0.7,xshift=26cm,yshift=26cm,rotate=-2*33.69006752597977083497](0,0)--(1,0)--(1,1)--(2,1)--(2,2)--(3,2)--(3,3)--(2,3)--(2,4)--(1,4)--(1,5)--(2,5)--(2,6)--(3,6)--(3,7)--(4,7)--(4,8)--(3,8)--(3,9)--(2,9)--(2,10)--(3,10)--(3,11)--(4,11)--(4,12)--(5,12);
\end{tikzpicture}
\\
\vdots
&
\vdots
\\
&
\\
\quad\text{\begin{tiny}Construction of the $(+1,+1,-1,+1,+1)$-Koch curve.\end{tiny}}\quad
&
\quad\text{\begin{tiny}Construction of the $(+1,+1,-1,-1,+1)$-Koch curve.\end{tiny}}\quad
\\
&
\\
&
\\
\begin{tikzpicture}[scale = 3/2.2360679774998,rotate=-63.43494882292198155938]
\draw node[above]at(0,0){$0$};
\draw node[below]at(1,2){$1$};
\draw[dashed,scale = 2.2360679774998,rotate=63.43494882292198155938](0,0)--(1,0);
\draw[-,line width=0.7](0,0)--(1,0)--(1,1)--(0,1)--(0,2)--(1,2);
\end{tikzpicture}
&
\begin{tikzpicture}[scale = 3/2.2360679774998,rotate=-63.43494882292198155938]
\draw node[above]at(0,0){$0$};
\draw node[above]at(1,2){$1$};
\draw[dashed,scale = 2.2360679774998,rotate=63.43494882292198155938](0,0)--(1,0);
\draw[-,line width=0.7](0,0)--(1,0)--(1,1)--(2,1)--(2,2)--(1,2);
\end{tikzpicture}
\\
\begin{tikzpicture}[scale =3/5,rotate=-2*63.43494882292198155938]
\draw node[above]at(0,0){$0$};
\draw node[below]at(-3,4){$1$};
\draw[dashed,scale = 5/2.2360679774998,rotate=63.43494882292198155938](0,0)--(1,0)--(1,1)--(0,1)--(0,2)--(1,2);
\draw[-,line width=0.7](0,0)--(1,0)--(1,1)--(0,1)--(0,2)--(1,2)--(1,3)--(0,3)--(0,2)--(-1,2)--(-1,3)--(-2,3)--(-2,2)--(-1,2)--(-1,1)--(-2,1)--(-2,2)--(-3,2)--(-3,1)--(-4,1)--(-4,2)--(-3,2)--(-3,3)--(-4,3)--(-4,4)--(-3,4);
\end{tikzpicture}
&
\begin{tikzpicture}[scale = 3/5,rotate=-2*63.43494882292198155938]
\draw node[above]at(0,0){$0$};
\draw node[left]at(-3,4){$1$};
\draw[dashed,scale = 5/2.2360679774998,rotate=63.43494882292198155938](0,0)--(1,0)--(1,1)--(2,1)--(2,2)--(1,2);
\draw[-,line width=0.7](0,0)--(1,0)--(1,1)--(2,1)--(2,2)--(1,2)--(1,3)--(0,3)--(0,4)--(-1,4)--(-1,3)--(0,3)--(0,4)--(1,4)--(1,5)--(0,5)--(0,6)--(-1,6)--(-1,7)--(-2,7)--(-2,6)--(-3,6)--(-3,5)--(-4,5)--(-4,4)--(-3,4);
\end{tikzpicture}
\\
\begin{tikzpicture}[scale =3/11.18033988749895,rotate=-63.43494882292198155938]
\draw node[above]at(0,0){$0$};
\draw node[below]at(5,10){$1$};
\draw[dashed,scale=11.18033988749895/5,rotate=-63.43494882292198155938](0,0)--(1,0)--(1,1)--(0,1)--(0,2)--(1,2)--(1,3)--(0,3)--(0,2)--(-1,2)--(-1,3)--(-2,3)--(-2,2)--(-1,2)--(-1,1)--(-2,1)--(-2,2)--(-3,2)--(-3,1)--(-4,1)--(-4,2)--(-3,2)--(-3,3)--(-4,3)--(-4,4)--(-3,4);
\draw[-,line width=0.7,rotate=-2*63.43494882292198155938](0,0)--(1,0)--(1,1)--(0,1)--(0,2)--(1,2)--(1,3)--(0,3)--(0,2)--(-1,2)--(-1,3)--(-2,3)--(-2,2)--(-1,2)--(-1,1)--(-2,1)--(-2,2)--(-3,2)--(-3,1)--(-4,1)--(-4,2)--(-3,2)--(-3,3)--(-4,3)--(-4,4)--(-3,4);
\draw[-,line width=0.7,xshift=5cm,rotate=90-2*63.43494882292198155938](0,0)--(1,0)--(1,1)--(0,1)--(0,2)--(1,2)--(1,3)--(0,3)--(0,2)--(-1,2)--(-1,3)--(-2,3)--(-2,2)--(-1,2)--(-1,1)--(-2,1)--(-2,2)--(-3,2)--(-3,1)--(-4,1)--(-4,2)--(-3,2)--(-3,3)--(-4,3)--(-4,4)--(-3,4);
\draw[-,line width=0.7,xshift=5cm,yshift=5cm,rotate=180-2*63.43494882292198155938](0,0)--(1,0)--(1,1)--(0,1)--(0,2)--(1,2)--(1,3)--(0,3)--(0,2)--(-1,2)--(-1,3)--(-2,3)--(-2,2)--(-1,2)--(-1,1)--(-2,1)--(-2,2)--(-3,2)--(-3,1)--(-4,1)--(-4,2)--(-3,2)--(-3,3)--(-4,3)--(-4,4)--(-3,4);
\draw[-,line width=0.7,xshift=0cm,yshift=5cm,rotate=90-2*63.43494882292198155938](0,0)--(1,0)--(1,1)--(0,1)--(0,2)--(1,2)--(1,3)--(0,3)--(0,2)--(-1,2)--(-1,3)--(-2,3)--(-2,2)--(-1,2)--(-1,1)--(-2,1)--(-2,2)--(-3,2)--(-3,1)--(-4,1)--(-4,2)--(-3,2)--(-3,3)--(-4,3)--(-4,4)--(-3,4);
\draw[-,line width=0.7,xshift=0cm,yshift=10cm,rotate=-2*63.43494882292198155938](0,0)--(1,0)--(1,1)--(0,1)--(0,2)--(1,2)--(1,3)--(0,3)--(0,2)--(-1,2)--(-1,3)--(-2,3)--(-2,2)--(-1,2)--(-1,1)--(-2,1)--(-2,2)--(-3,2)--(-3,1)--(-4,1)--(-4,2)--(-3,2)--(-3,3)--(-4,3)--(-4,4)--(-3,4);
\end{tikzpicture}
&
\begin{tikzpicture}[scale = 3/11.18033988749895,rotate=-63.43494882292198155938]
\draw node[right]at(0,0){$0$};
\draw node[below]at(5,10){$1$};
\draw[dashed,scale=11.18033988749895/5,rotate=-63.43494882292198155938](0,0)--(1,0)--(1,1)--(2,1)--(2,2)--(1,2)--(1,3)--(0,3)--(0,4)--(-1,4)--(-1,3)--(0,3);
\draw[dashed,scale=11.18033988749895/5,rotate=-63.43494882292198155938](0,4)--(1,4)--(1,5)--(0,5)--(0,6)--(-1,6)--(-1,7)--(-2,7)--(-2,6)--(-3,6)--(-3,5)--(-4,5)--(-4,4)--(-3,4);
\draw[-,line width=0.7,rotate=-2*63.43494882292198155938](0,0)--(1,0)--(1,1)--(2,1)--(2,2)--(1,2)--(1,3)--(0,3)--(0,4)--(-1,4)--(-1,3)--(0,3)--(0,4)--(1,4)--(1,5)--(0,5)--(0,6)--(-1,6)--(-1,7)--(-2,7)--(-2,6)--(-3,6)--(-3,5)--(-4,5)--(-4,4)--(-3,4);
\draw[-,line width=0.7,xshift=5cm,rotate=90-2*63.43494882292198155938](0,0)--(1,0)--(1,1)--(2,1)--(2,2)--(1,2)--(1,3)--(0,3)--(0,4)--(-1,4)--(-1,3)--(0,3)--(0,4)--(1,4)--(1,5)--(0,5)--(0,6)--(-1,6)--(-1,7)--(-2,7)--(-2,6)--(-3,6)--(-3,5)--(-4,5)--(-4,4)--(-3,4);
\draw[-,line width=0.7,xshift=5cm,yshift=5cm,rotate=-2*63.43494882292198155938](0,0)--(1,0)--(1,1)--(2,1)--(2,2)--(1,2)--(1,3)--(0,3)--(0,4)--(-1,4)--(-1,3)--(0,3)--(0,4)--(1,4)--(1,5)--(0,5)--(0,6)--(-1,6)--(-1,7)--(-2,7)--(-2,6)--(-3,6)--(-3,5)--(-4,5)--(-4,4)--(-3,4);
\draw[-,line width=0.7,xshift=10cm,yshift=5cm,rotate=90-2*63.43494882292198155938](0,0)--(1,0)--(1,1)--(2,1)--(2,2)--(1,2)--(1,3)--(0,3)--(0,4)--(-1,4)--(-1,3)--(0,3)--(0,4)--(1,4)--(1,5)--(0,5)--(0,6)--(-1,6)--(-1,7)--(-2,7)--(-2,6)--(-3,6)--(-3,5)--(-4,5)--(-4,4)--(-3,4);
\draw[-,line width=0.7,xshift=10cm,yshift=10cm,rotate=180-2*63.43494882292198155938](0,0)--(1,0)--(1,1)--(2,1)--(2,2)--(1,2)--(1,3)--(0,3)--(0,4)--(-1,4)--(-1,3)--(0,3)--(0,4)--(1,4)--(1,5)--(0,5)--(0,6)--(-1,6)--(-1,7)--(-2,7)--(-2,6)--(-3,6)--(-3,5)--(-4,5)--(-4,4)--(-3,4);
\end{tikzpicture}
\\
\vdots
&
\vdots
\\
&
\\
\quad\text{\begin{tiny}Construction of the $(+1,+1,+1,-1,+1)$-Koch curve.\end{tiny}}\quad
&
\quad\text{\begin{tiny}Construction of the $(+1,+1,-1,-1,-1)$-Koch curve.\end{tiny}}\quad\
\end{tabular}
$$

$$$$

It is well known that the classical Koch curve has Hausdorff, packing and box dimension $\log4/\log3$ since the corresponding IFS satisfies the open set condition (OSC). As a generalization, we have the following, where $\lfloor x\rfloor$ denotes the greatest integer no larger than $x$.

\begin{corollary}\label{cor} Let $m\ge2$ be an integer, $\delta_0=\cdots=\delta_{\lfloor\frac{m}{4}\rfloor}=+1$, $\delta_{\lfloor\frac{m}{4}\rfloor+1}=\cdots=\delta_{m-\lfloor\frac{m}{4}\rfloor-1}=-1$, $\delta_{m-\lfloor\frac{m}{4}\rfloor}=\cdots=\delta_m=+1$ and $\delta=(\delta_n)_{n\ge0}$ be the $(+1,\delta_1,\cdots,\delta_m)$-Thue-Morse sequence. Then $p(m+1)$ is a real number in $[3,m+1]$, the $(+1,\delta_1,\cdots,\delta_m)$-IFS satisfies the OSC, and the $(+1,\delta_1,\cdots,\delta_m)$-Koch curve has Hausdorff, packing and box dimension $\log(m+1)/\log p(m+1)$.
\end{corollary}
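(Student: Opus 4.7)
I would establish the three claims of Corollary~\ref{cor} in order. First, for the value of $p(m+1)$: the hypothesis is equivalent to $\delta_k = \delta_{m-k}$ for all $k$ together with $\delta_k = \operatorname{sign}(\cos(2k\pi/m))$ whenever $\cos(2k\pi/m) \ne 0$. Pairing $k$ with $m-k$ in the sum $p(m+1) = \sum_{k=0}^m \delta_k e^{2k\pi i/m}$ cancels the imaginary part, and the real part simplifies to
$$p(m+1) = 2 + \sum_{k=1}^{m-1} \delta_k\cos(2k\pi/m) = 2 + \sum_{k=1}^{m-1} |\cos(2k\pi/m)|.$$
Splitting this sum by sign and using the identity $\sum_{k=0}^{m-1}\cos(2k\pi/m) = 0$ yields $p(m+1) = 3 + 2X$, where $X$ is the total of the positive values of $\cos(2k\pi/m)$ on $\{1,\cdots,m-1\}$; hence $p(m+1) \ge 3$, while the upper bound $p(m+1) \le m+1$ is immediate.

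For the OSC I propose taking $V \subset \C$ to be the interior of the convex polygon obtained by unioning the convex hull of the polygonal line $P(1)/p(m+1)$ with its reflection across $[0,1]$. By construction $V$ is symmetric across both the real axis and the vertical line $\{\operatorname{Re} z = \tfrac{1}{2}\}$; the latter symmetry reflects the involution $\sigma(z) = p(m+1) - \bar z$, which permutes the maps by $S_j \leftrightarrow S_{m-j}$ because $\delta_k = \delta_{m-k}$. Concretely, $V$ is the rhombus with vertices $0,\,\tfrac{1}{2} + i\tfrac{\sqrt{3}}{6},\,1,\,\tfrac{1}{2} - i\tfrac{\sqrt{3}}{6}$ when $m = 3$, and the hexagon with vertices $0,\,\tfrac{1}{3} \pm i\tfrac{1}{3},\,\tfrac{2}{3} \pm i\tfrac{1}{3},\,1$ when $m = 4$. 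To verify the OSC, I would show that each $S_j(V)$ is a smaller similar polygon with its vertices in $\overline V$ (many of them on $\partial V$), and that the interiors $S_j(V)$ are pairwise disjoint; disjointness should follow from the monotone real progression of $P(1)/p(m+1)$ (each step $\delta_k e^{2k\pi i/m}$ has real part $|\cos(2k\pi/m)| \ge 0$), which sorts the images $S_j(V)$ by increasing real part along $[0,1]$, combined with the vertical symmetry.

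Once OSC is in hand, the dimension computation is standard. The Moran--Hutchinson theorem gives $\dim_H K = s$ with $(m+1)(p(m+1))^{-s} = 1$, i.e.\ $s = \log(m+1)/\log p(m+1)$. Covering $K$ by the $(m+1)^n$ level-$n$ cylinder images $S_{i_1} \circ \cdots \circ S_{i_n}(K)$, each of diameter at most $\diam(K)/p(m+1)^n$, gives $\dim_B K \le s$, which together with $\dim_H K \le \dim_P K \le \dim_B K$ forces all three dimensions to equal $s$. The main difficulty is the OSC verification: although the geometric idea is clear, the detailed combinatorics of how the images $S_j(V)$ fit inside $V$ depend on the shape of $P(1)/p(m+1)$, and the case $4 \mid m$, where $P(1)/p(m+1)$ contains vertical segments, requires separate attention.
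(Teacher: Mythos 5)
Your treatment of $p(m+1)$ is correct, and in fact slicker than the paper's: both arguments kill the imaginary part via $\delta_k=\delta_{m-k}$ and observe that $\delta_k\cos\frac{2k\pi}{m}=|\cos\frac{2k\pi}{m}|$, but the paper obtains the lower bound $3$ by inspecting the middle term(s) explicitly ($\delta_{m/2}\cos\pi=1$ for even $m$, $2\cos\frac{\pi}{m}\ge 1$ for odd $m$), whereas your identity $\sum_{k=0}^{m-1}\cos\frac{2k\pi}{m}=0$ gives $p(m+1)=3+2X\ge 3$ in one line. The concluding dimension step is the same as the paper's (its Theorem 4). The genuine gap is in the OSC verification: the open set you propose does not satisfy $\bigcup_{j}S_j(V)\subset V$ in general, and it already fails for $m=4$, one of the two cases you write out. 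There $\delta=(+1,+1,-1,+1,+1)$, $p(5)=3$ and $S_2(z)=\frac{1+i+z}{3}$; your $V$ is the hexagon with vertices $0$, $\frac{1}{3}\pm\frac{i}{3}$, $\frac{2}{3}\pm\frac{i}{3}$, $1$, which lies in $\{|\text{Im }z|\le\frac{1}{3}\}$, while $S_2(V)=\frac{1+i}{3}+\frac{1}{3}V$ is centred on the top edge $[\frac{1+i}{3},\frac{2+i}{3}]$ and reaches up to $\text{Im }z=\frac{1}{3}+\frac{1}{9}=\frac{4}{9}>\frac{1}{3}$, hence pokes out of $V$.

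The root of the problem is structural: reflecting across $[0,1]$ thickens $V$ below the axis without raising its upper boundary, so the middle maps, which carry $V$ onto the top of the polygon $P(1)/p(m+1)$, necessarily overshoot. The paper instead takes $V$ to be an isosceles triangle over $[0,1]$ with apex $\frac{1}{2}+\frac{b_m}{2a_m}i$, where $a_m+b_mi=p(\lfloor\frac{m}{4}\rfloor+1)$ --- the slope of its left edge matches the direction in which the first $\lfloor\frac{m}{4}\rfloor+1$ segments of $P(1)$ climb, and the apex is high enough to absorb the images sitting on the top of the polygon. It first checks that $P(1)/p(m+1)$ stays in the closed upper half-plane when $m\equiv 0,1,2 \pmod 4$, and only for $m\equiv 3\pmod 4$, where $\text{Im }p(\frac{m+1}{2})<0$, does it append a second shallower triangle below $[0,1]$ to form a quadrilateral. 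Separately, your disjointness argument by ``increasing real part'' is not sufficient when $4\mid m$: consecutive images then sit on vertical base segments and their real-part ranges overlap (for $m=4$, $S_0(V)$ occupies $0\le x\le\frac{1}{3}$ and $S_1(V)$ occupies $\frac{2}{9}\le x\le\frac{4}{9}$), so one must exhibit actual separating lines. You flag these points as needing attention, but as stated the proposed open set is not merely unverified --- it is the wrong set, and replacing it is precisely the nontrivial content of the paper's proof.
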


To obtain the Hausdorff dimension of the $(+1,\delta_1,\cdots,\delta_m)$-Koch curve in Corollary \ref{cor}, one can try to use the last statement in Theorem \ref{main}. But here we use classical theory on IFS by verifying the OSC.

\begin{remark} Generalized Thue-Morse sequences defined in this paper are essentially contained in the concept of generalized Morse sequences in \cite{K68}. In fact, given $\delta_1,\cdots,\delta_m\in\{-1,+1\}$, for the $(+1,\delta_1,\cdots,\delta_m)$-Thue-Morse sequence $\delta=(\delta_n)_{n\ge0}$, if we write $\delta_n=(-1)^{\theta_n}$ where $\theta=(\theta_n)_{n\ge0}$ is a sequence on $\{0,1\}$, by \cite[Proposition 3.1 (1)]{L20} and inductive, one can check that
$$\theta=(0,\theta_1,\cdots,\theta_m)\times(0,\theta_1,\cdots,\theta_m)\times(0,\theta_1,\cdots,\theta_m)\times\cdots$$
where we use the notation of products of blocks mentioned in \cite{K68}. It follows from \cite[Lemma 1]{K68} that $\theta$ is periodic if and only if $\theta=0^\infty$ or $(01)^\infty$. Therefore, if $\theta$ is not the trivial $0^\infty$ or $(01)^\infty$, it is a generalized Morse sequence in the sense of \cite{K68}, and $\delta$ can be viewed as a $\pm1$ version of such a sequence.
\end{remark}

We give some notation and preliminaries in Section 2, and then prove Theorem \ref{main} and Corollary \ref{cor} in Section 3.

\section{Notation and preliminaries}

For any $z_1,z_2\in\C$, we use $[z_1,z_2]:=\{cz_1+(1-c)z_2:c\in[0,1]\}$ to denote the segment connecting $z_1$ and $z_2$. For any $c\in\C$ and $Z\subset\C$, let $cZ:=\{cz:z\in Z\}$ and $c+Z:=\{c+z:z\in Z\}$. Besides, for any $z\in\C$ we use $\text{Re }z$ and $\text{Im }z$ to denote respectively the real part and the imaginary part of $z$.

Let $\cA$ be a finite alphabet of symbols and $\cA^*$ be the free monoid generated by $\cA$.

A map $\phi:\cA^*\to\cA^*$ is called a \textit{morphism} if
$$\phi(uv)=\phi(u)\phi(v)$$
for all words $u,v\in\cA^*$. Moreover $\phi$ is called \textit{null-free} if $\phi(a)$ is not the empty word for any $a\in\cA$, and called \textit{primitive} if there exists an $n\in\N$ such that $a\in\phi^n(b)$ for all $a,b\in\cA$, where $u\in v$ denotes that $u$ occurs in $v$ for any words $u,v\in\cA^*$. For a morphism $\phi:\cA^*\to\cA^*$, the corresponding matrix $M_\phi=(m_{a,b})_{a,b\in\cA}$ is defined by $m_{a,b}:=|\phi(a)|_b$, where $|w|_b$ denotes the number of the symbol $b$ in the word $w$. In addition, we use $|w|$ to denote the length of the finite word $w$.

A map $f:\cA^*\to\C$ is called a \textit{homomorphism} if
$$f(uv)=f(u)+f(v)$$
for all words $u,v\in \cA^*$, and an $\R$-linear map $L:\C\to\C$ (regarded as $\R^2\to\R^2$) is called \textit{expansive} if both eigenvalues have modulus more than one.

Let $\cH(\C)$ be the set of all non-empty compact subsets of $\C$ and $d_H$ be the Hausdorff metric on $\cH(\C)$. The following result was given by Dekking.

\begin{theorem}(\cite[Theorem 2.4]{D82a})\label{converge} Let $\phi:\cA^*\to\cA^*$ be a null-free morphism, $f:\cA^*\to\C$ be a homomorphism, $L:\C\to\C$ be an expansive $\R$-linear map such that
$$f\circ\phi=L\circ f,$$
and $K:\cA^*\to\cH(\C)$ be a map satisfying
$$K(uv)=K(u)\cup(f(u)+K(v))$$
for all $u,v\in \cA^*$. Then for any non-empty word $w\in\cA^*$, there exists a unique compact set $W$ such that
$$L^{-n}K(\phi^n(w))\overset{d_H}{\longrightarrow}W\quad\text{as }n\to\infty,$$
and $W$ is a continuous image of $[0,1]$.
\end{theorem}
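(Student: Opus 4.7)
The plan is to split the argument into (i) convergence of the sets $W_n^{(w)} := L^{-n} K(\phi^n(w))$ in the Hausdorff metric, and (ii) construction of a continuous surjection $[0,1]\to W$. Since $L$ is expansive, a standard linear-algebra argument provides a norm $\|\cdot\|_*$ on $\C\cong\R^2$ and a constant $\lambda\in(0,1)$ with $\|L^{-1}z\|_*\le\lambda\|z\|_*$ for every $z$; all Hausdorff distances below are measured in this norm.

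For (i), fix a nonempty $w$ and write $\phi(w)=c_1\cdots c_p$ with $c_i\in\cA$ (nonempty because $\phi$ is null-free). Iterating the relation $K(uv)=K(u)\cup(f(u)+K(v))$ along the factorisation $\phi^{n+1}(w)=\phi^n(c_1)\cdots\phi^n(c_p)$, together with the consequence $f\circ\phi^n=L^n\circ f$ of the intertwining hypothesis, gives
$$K(\phi^{n+1}(w))=\bigcup_{i=1}^{p}\bigl(L^n f(c_1\cdots c_{i-1})+K(\phi^n(c_i))\bigr),$$
and rescaling by $L^{-(n+1)}$ converts this into
$$W_{n+1}^{(w)}=\bigcup_{i=1}^{p}\bigl(L^{-1}f(c_1\cdots c_{i-1})+L^{-1}W_n^{(c_i)}\bigr).$$
Applying the same reasoning with $n$ replaced by $n-1$ yields the same formula for $W_n^{(w)}$ with $W_{n-1}^{(c_i)}$ in place of $W_n^{(c_i)}$. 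Comparing these two decompositions piecewise, using $d_H(A\cup B,A'\cup B')\le\max(d_H(A,A'),d_H(B,B'))$ and the contraction $\|L^{-1}\|_*\le\lambda$, gives $d_H(W_{n+1}^{(w)},W_n^{(w)})\le\lambda D_n$ where $D_n:=\max_{a\in\cA}d_H(W_n^{(a)},W_{n-1}^{(a)})$. Maximizing the left side over single letters $w$ yields $D_{n+1}\le\lambda D_n$, so $D_n$ decays geometrically, $(W_n^{(w)})$ is Cauchy in $(\cH(\C),d_H)$, and completeness produces a unique limit $W$.

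Part (ii) is the main obstacle. For each letter $a\in\cA$ I fix a continuous path $\gamma_0^{(a)}:[0,1]\to K(a)$ chosen so that any concatenation pattern produced by $\phi$ leaves endpoints matching after translation by $f$; this is automatic when each $K(a)$ is a segment of the form $[0,f(a)]$, as in the intended applications. Concatenating and rescaling then yields, for every nonempty $w$ and every $n$, a continuous surjection $\gamma_n^{(w)}:[0,1]\to W_n^{(w)}$ that subdivides $[0,1]$ into $|\phi^n(w)|$ intervals, one per letter of $\phi^n(w)$. The self-similar recursion from part (i) lets me refine the level-$n$ subdivision to the level-$(n+1)$ subdivision consistently, so for every $t\in[0,1]$ both $\gamma_n^{(w)}(t)$ and $\gamma_{n+1}^{(w)}(t)$ lie inside a single level-$n$ piece of diameter $O(\lambda^n)$. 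This yields the uniform bound $\sup_t\|\gamma_{n+1}^{(w)}(t)-\gamma_n^{(w)}(t)\|_*\le C\lambda^n$, so the $\gamma_n^{(w)}$ converge uniformly to a continuous $\gamma^{(w)}:[0,1]\to\C$ with image $W$, completing the proof.
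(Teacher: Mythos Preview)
The paper does not give its own proof of this statement; it is quoted verbatim from Dekking \cite{D82a} and used as a black box in the proof of Theorem~\ref{main}. So there is nothing to compare your argument against, and it has to stand on its own.

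Your part (i) is correct: decomposing $K(\phi^{n+1}(w))$ along $\phi(w)=c_1\cdots c_p$ and using the contraction $\|L^{-1}\|_*\le\lambda$ gives the clean recursion $D_{n+1}\le\lambda D_n$ and hence a Cauchy sequence in $(\cH(\C),d_H)$.

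Part (ii), however, has a genuine gap. The theorem is stated for an \emph{arbitrary} map $K:\cA^*\to\cH(\C)$ satisfying $K(uv)=K(u)\cup(f(u)+K(v))$; nothing forces $K(a)$ to be connected, let alone a segment with the correct endpoints. Your path construction explicitly assumes $K(a)=[0,f(a)]$ (``as in the intended applications'') and does not cover the general case. The missing step is short but essential: show that the limit $W$ is independent of the choice of $K$. If $K'$ is another admissible map with the same $f$, then for any word $u=u_1\cdots u_k$ one has $d_H(K(u),K'(u))\le\max_{a\in\cA}d_H(K(a),K'(a))=:M$ by comparing the two unions term by term, and hence $d_H\bigl(L^{-n}K(\phi^n(w)),L^{-n}K'(\phi^n(w))\bigr)\le\lambda^n M\to0$. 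Once this is in place you may replace $K$ by $K'(a):=[0,f(a)]$ and your endpoint-matching path argument goes through. A minor related point: when $\phi$ is not of constant length, equal subdivisions of $[0,1]$ into $|\phi^n(w)|$ pieces need not nest from one level to the next; you should build the subdivision recursively (split according to $\phi(w)$, then split the $i$-th piece according to $\phi(c_i)$, and so on) so that the ``same level-$n$ piece'' claim is literally true.
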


In the following we recall some preliminaries on iterated function systems. A map $S:\C\to\C$ is called a \textit{contraction} if there exists $c\in(0,1)$ such that
$$|S(z_1)-S(z_2)|\le c|z_1-z_2|\quad\text{for all }z_1,z_2\in\C.$$
Moreover, if equality holds, i.e., if $|S(z_1)-S(z_2)|=c|z_1-z_2|$ for all $z_1,z_2\in\C$, we say that $S$ is a \textit{contracting similarity}.

A finite family of contractions $\{S_1,S_2,\cdots,S_n\}$, with $n\ge2$, is called an \textit{iterated function system} (\textit{IFS}). The following is a fundamental result. See for example \cite[Theorem 9.1]{F90}.

\begin{theorem}\label{attractor} Any family of contractions $\{S_1,\cdots,S_n\}$ has a unique attractor $F$, i.e., a non-empty compact set such that
$$F=\bigcup_{j=1}^nS_j(F).$$
\end{theorem}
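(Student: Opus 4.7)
The plan is to realize $F$ as the unique fixed point of the Hutchinson operator on the complete metric space $(\cH(\C),d_H)$, via Banach's contraction principle. Concretely, I would define
$$T:\cH(\C)\to\cH(\C),\qquad T(A):=\bigcup_{j=1}^n S_j(A),$$
and show that $T$ is a well-defined contraction on this space; then the fixed point guaranteed by Banach is precisely the attractor $F$.

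First I would verify that $T$ indeed maps $\cH(\C)$ into itself. Each $S_j$ is continuous (being a contraction, hence $1$-Lipschitz at worst), so $S_j(A)$ is compact and non-empty whenever $A$ is, and a finite union of such sets is again compact and non-empty. Next I would recall (or briefly argue) that $(\cH(\C),d_H)$ is a complete metric space: Cauchy sequences of non-empty compact sets in a complete ambient metric space converge in Hausdorff metric to their topological limit, which is again non-empty and compact. This is a standard fact that I would simply cite, e.g.\ from Falconer \cite{F90}.

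The main step — and the only one requiring a small argument — is to check that $T$ is a strict contraction on $(\cH(\C),d_H)$. Let $c_j\in(0,1)$ be contraction ratios for $S_j$, and put $c:=\max_{1\le j\le n}c_j<1$. Two elementary properties of the Hausdorff metric do the work: for any $A,B\in\cH(\C)$,
$$d_H(S_j(A),S_j(B))\le c_j\,d_H(A,B)\le c\,d_H(A,B),$$
which follows directly from the Lipschitz estimate $|S_j(z_1)-S_j(z_2)|\le c_j|z_1-z_2|$ applied to each of the two one-sided suprema defining $d_H$; and
$$d_H\!\left(\bigcup_{j=1}^n A_j,\bigcup_{j=1}^n B_j\right)\le \max_{1\le j\le n}d_H(A_j,B_j),$$
which is immediate by unwinding the definition of $d_H$ as a supremum of distances from points in one set to the other. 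Combining these two inequalities with $A_j:=S_j(A)$ and $B_j:=S_j(B)$ gives
$$d_H(T(A),T(B))\le c\,d_H(A,B),$$
so $T$ is a contraction on $(\cH(\C),d_H)$.

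Finally, Banach's fixed point theorem yields a unique $F\in\cH(\C)$ with $T(F)=F$, i.e.\ a unique non-empty compact set satisfying $F=\bigcup_{j=1}^n S_j(F)$, which is the desired attractor. I do not expect any genuine obstacle here; the only mildly delicate point is the two Hausdorff-metric inequalities above, and both are routine once one writes out $d_H(A,B)=\max\{\sup_{a\in A}\dist(a,B),\sup_{b\in B}\dist(b,A)\}$.
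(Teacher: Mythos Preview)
Your argument is correct and is exactly the standard Hutchinson approach. Note that the paper does not actually prove this theorem: it is stated as a preliminary result with the reference ``See for example \cite[Theorem 9.1]{F90}'', and the proof in Falconer's book is precisely the Banach fixed-point argument on $(\cH(\C),d_H)$ that you have outlined.
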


We say that an IFS $\{S_1,\cdots,S_n\}$ satisfies the \textit{open set condition} (\textit{OSC}) if there exists a non-empty bounded open set $V$ such that
$$\bigcup_{j=1}^nS_j(V)\subset V$$
with the union disjoint. The following theorem is well known. See for example \cite[Theorem 9.3]{F90}.

\begin{theorem}\label{OSC} If the OSC holds for the contracting similarities $S_j:\C\to\C$ with the ratios $c_j\in(0,1)$ for all $j\in\{1,\cdots,n\}$, then the attractor of the IFS $\{S_1,\cdots,S_n\}$ has Hausdorff, packing and box dimension $s$, where $s$ is given by
$$\sum_{j=1}^nc_j^s=1.$$
\end{theorem}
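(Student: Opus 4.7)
The plan is to prove the two bounds $\overline{\dim}_B F \leq s$ and $\dim_H F \geq s$ separately; combined with the standard inequalities $\dim_H \leq \dim_P \leq \overline{\dim}_B$ and $\dim_H \leq \underline{\dim}_B \leq \overline{\dim}_B$, this forces the Hausdorff, packing, and (upper and lower) box dimensions of $F$ all to equal $s$.

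For the upper bound, I would use the natural coverings coming from iterating the self-similarity. Writing $S_w := S_{w_1}\circ\cdots\circ S_{w_k}$ and $c_w := c_{w_1}\cdots c_{w_k}$ for $w = w_1\cdots w_k \in \{1,\ldots,n\}^k$, one has $F = \bigcup_{|w|=k} S_w(F)$. Fix any closed ball $U \supset F$, and for $\delta>0$ let $\mathcal{W}_\delta$ be the antichain of finite words $w$ with $c_w < \delta/\operatorname{diam}(U) \leq c_{w_1}\cdots c_{w_{|w|-1}}$; these are the first-passage cutoffs down the $n$-ary tree. By induction on depth the Moran equation $\sum_j c_j^s = 1$ gives $\sum_{w \in \mathcal{W}_\delta} c_w^s = 1$, and since $c_w \geq c_{\min}\cdot\delta/\operatorname{diam}(U)$ on $\mathcal{W}_\delta$ (with $c_{\min} := \min_j c_j$), we obtain $|\mathcal{W}_\delta| \lesssim \delta^{-s}$. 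The sets $\{S_w(U) : w \in \mathcal{W}_\delta\}$ cover $F$ with diameters at most $\delta$, yielding $N_\delta(F) \lesssim \delta^{-s}$ and hence $\overline{\dim}_B F \leq s$.

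For the lower bound, I would construct the self-similar measure $\mu$ supported on $F$ with weights $p_j := c_j^s$ (which sum to one by Moran), defined as the push-forward to $F$ of the Bernoulli product measure on the symbolic space $\{1,\ldots,n\}^{\mathbb{N}}$; it satisfies $\mu(S_w(F)) = c_w^s$ for every finite word $w$. The crux is the mass-distribution estimate $\mu(B(x,r)) \leq C r^s$ uniformly in $x$ and $r$; once this is in hand, the mass distribution principle delivers $\mathcal{H}^s(F)>0$ and hence $\dim_H F \geq s$.

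The main obstacle, and the only place the OSC is used, is exactly this estimate. Let $V$ be the open set provided by the OSC; iterating $\bigcup_j S_j(V) \subset V$ disjointly shows that for the antichain $\mathcal{W}_r$ (with threshold $r/\operatorname{diam}(V)$), the images $\{S_w(V) : w \in \mathcal{W}_r\}$ are pairwise disjoint open subsets of $V$. Any $w \in \mathcal{W}_r$ with $S_w(F) \cap B(x,r) \neq \emptyset$ satisfies $S_w(V) \subset B(x, C_1 r)$ for a constant $C_1$ depending only on $V$ and $c_{\min}$, because $\operatorname{diam}(S_w(V)) \asymp r$ on $\mathcal{W}_r$. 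Since each such $S_w(V)$ also contains a disc of radius $\asymp r$ and the family is pairwise disjoint, a Euclidean area-packing argument bounds the number of such $w$ by a constant $C_2$ independent of $x$ and $r$. Combining,
$$
\mu(B(x,r)) \;\leq\; \sum_{\substack{w \in \mathcal{W}_r \\ S_w(F)\cap B(x,r)\neq\emptyset}} c_w^s \;\leq\; C_2\,\bigl(r/\operatorname{diam}(V)\bigr)^s \;\lesssim\; r^s,
$$
which finishes the proof. The subtle point I expect to have to handle carefully is that the disjointness from OSC holds only for the open sets $S_w(V)$ (not for $S_w(F)$), so one must pass between $F$ and $V$ via the inclusions $S_w(F) \subset \overline{S_w(V)}$ and carefully track the absolute constants in the packing argument.
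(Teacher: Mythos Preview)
Your proof is correct and follows the standard Hutchinson--Moran argument. However, the paper does not actually prove this theorem: it is stated in the preliminaries section as a known result, with the sentence ``See for example \cite[Theorem 9.3]{F90}'' immediately preceding it, and no proof is given. So there is nothing to compare your approach against beyond noting that what you have sketched is precisely the classical proof one finds in Falconer's book (the cited reference): the upper box-dimension bound via Moran covers indexed by a stopping-time antichain, and the Hausdorff lower bound via the self-similar measure with weights $c_j^s$ together with the volume-packing lemma that the OSC provides. Your identification of the one genuinely delicate step---that disjointness holds for the $S_w(V)$ rather than the $S_w(F)$, requiring $F\subset\overline{V}$---is also the standard caveat.
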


To end this section, we present the following basic property for contractions.

\begin{proposition}\label{dH} Let $S_1,S_2,\cdots,S_n$ be contractions on $\C$. Write
$$S(A):=\bigcup_{j=1}^nS_j(A)\quad\text{for all }A\subset\C.$$
Then for all $F,F_1,F_2,\cdots\subset\C$ such that $F_k\overset{d_H}{\longrightarrow}F$ as $k\to\infty$, we have $S(F_k)\overset{d_H}{\longrightarrow}S(F)$.
\end{proposition}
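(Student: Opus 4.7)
The plan is to reduce this to two elementary properties of the Hausdorff metric and then chain them using the contraction ratios. Write $c_j\in(0,1)$ for a Lipschitz constant of $S_j$ and set $c:=\max_{1\le j\le n}c_j<1$.

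The first step is to verify that each contraction $S_j$ is Lipschitz with constant $c_j$ on subsets endowed with $d_H$, i.e.,
\[
d_H(S_j(A),S_j(B))\le c_j\, d_H(A,B).
\]
This is a direct unpacking of the two-sided inclusion definition of $d_H$: any point of $S_j(A)$ is $S_j(a)$ for some $a\in A$ which lies within $d_H(A,B)+\varepsilon$ of some $b\in B$, and applying $S_j$ multiplies that distance by at most $c_j$; the reverse inclusion is symmetric. Letting $\varepsilon\to0$ gives the claim.

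The second step is the non-expansiveness of finite unions under $d_H$,
\[
d_H\Bigl(\bigcup_{j=1}^n A_j,\bigcup_{j=1}^n B_j\Bigr)\le \max_{1\le j\le n} d_H(A_j,B_j),
\]
which is again immediate from the inclusion definition, since any $x\in A_j$ lies within $d_H(A_j,B_j)$ of a point of $B_j\subset\bigcup_i B_i$, and symmetrically.

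Chaining the two yields
\[
d_H(S(F_k),S(F))\le \max_{1\le j\le n} d_H(S_j(F_k),S_j(F))\le c\, d_H(F_k,F),
\]
which tends to $0$ as $k\to\infty$ by assumption. There is no real obstacle here: the argument is essentially a bookkeeping exercise with the definition of the Hausdorff metric, and the only delicate point is to ensure all sets sit in a setting where $d_H$ is a well-defined metric (e.g.\ non-empty compact subsets of $\C$), which is implicit in the hypothesis $F_k\xrightarrow{d_H}F$.
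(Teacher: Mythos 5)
Your argument is correct and is essentially identical to the paper's: the paper's one-line proof rests on exactly the chained inequality $d_H(S(F_k),S(F))\le\max_j d_H(S_j(F_k),S_j(F))\le\max_j c_j\,d_H(F_k,F)$, and you simply supply the routine verifications of the two constituent facts that the paper leaves implicit. No discrepancy to report.
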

\begin{proof} This follows from the fact that for all $k\in\N$ we have
$$d_H(S(F_k),S(F))\le\max_{1\le j\le n}d_H(S_j(F_k),S_j(F))\le\max_{1\le j\le n}c_jd_H(F_k,F),$$
where for each $j\in\{1,\cdots,n\}$, $c_j\in(0,1)$  satisfies $|S_j(z_1)-S_j(z_2)|\le c_j|z_1-z_2|$ for all $z_1,z_2\in\C$.
\end{proof}

\section{Proofs of Theorem \ref{main} and Corollary \ref{cor}}

\begin{proof}[Proof of Theorem \ref{main}] Let $m\in\N$, $\delta_0=+1$, $\delta_1,\cdots,\delta_m\in\{+1,-1\}$ and $\delta=(\delta_n)_{n\ge0}$ be the $(+1,\delta_1,\cdots,\delta_m)$-Thue-Morse sequence such that $|p(m+1)|>1$.
\newline(1) Prove that there exists a unique compact set $K\subset\C$ such that
$$(p(m+1))^{-n}P(n)\overset{d_H}{\longrightarrow}K\quad\text{as }n\to\infty$$
and $K$ is a continuous image of $[0,1]$ by using Theorem \ref{converge}.
\newline\textcircled{\scriptsize{1}} If $m$ is odd, let $\cA:=\{0,1,2,\cdots,2m-1\}$. Define the morphism $\phi:\cA^*\to \cA^*$ by
$$a\mapsto d_{a,0}d_{a,1}\cdots d_{a,m}$$
for all $a\in \cA$ where
$$d_{a,k}:=\left\{\begin{array}{lll}
a+2k&\text{mod }2m& \text{if } \delta_k=+1\\
a+2k+m&\text{mod }2m& \text{if } \delta_k=-1
\end{array}\right.$$
for all $k\in\{0,1,\cdots,m\}$. Obviously $d_{a,0}=a$ for all $a\in \cA$ and it is straightforward to check
$$e^\frac{d_{a,k}\pi i}{m}=\delta_ke^\frac{(a+2k)\pi i}{m}$$
for all $k\in\{0,1,\cdots,m\}$. Let $\epsilon$ be the empty word. Define $f(\epsilon):=0$ and
$$f(w_1\cdots w_n):=\sum_{k=1}^ne^\frac{w_k\pi i}{m}$$
for any $w_1\cdots w_n\in\cA^*$. Then $f:\cA^*\to\C$ is a homomorphism satisfying
$$f(a)=e^{\frac{a\pi i}{m}}$$
for all $a\in \cA$ and
$$f(uv)=f(u)+f(v)$$
for all $u,v\in \cA^*$. Let $L:\C\to\C$ be the linear map defined by
$$L(z):=p(m+1)\cdot z$$
for all $z\in\C$. It follows from $|p(m+1)|>1$ that $L$ is expansive.

We can check $f\circ\phi=L\circ f$. In fact, for the empty word we have $f\circ\phi(\epsilon)=f(\epsilon)=0=L(0)=L\circ f(\epsilon)$, for any $a\in \cA$ we have
\begin{eqnarray*}
f\circ\phi(a)&=&f(d_{a,0}\cdots d_{a,m})=\sum_{k=0}^me^\frac{d_{a,k}\pi i}{m}=\sum_{k=0}^m\delta_ke^\frac{(a+2k)\pi i}{m}=e^\frac{a\pi i}{m}\sum_{k=0}^m\delta_ke^\frac{2k\pi i}{m}\\
&=&f(a)p(m+1)=L\circ f(a),
\end{eqnarray*}
and for any $w_1\cdots w_n\in \cA^*$ we have
\begin{eqnarray*}
f\circ\phi(w_1\cdots w_n)&=&f(\phi(w_1)\cdots\phi(w_n))=f(\phi(w_1))+\cdots+f(\phi(w_n))\\
&=&L(f(w_1))+\cdots+L(f(w_n))=L(f(w_1)+\cdots+f(w_n))=L\circ f(w_1\cdots w_n).
\end{eqnarray*}

Define $K(\epsilon)$ to be the singleton $\{0\}$,
$$K(a):=[0,f(a)]$$
for any $a\in \cA$, and
$$K(w_1\cdots w_n):=\bigcup_{k=1}^n\Big(f(w_1\cdots w_{k-1})+K(w_k)\Big)$$
for any $w_1\cdots w_n\in\cA^*$, where $f(w_1\cdots w_{k-1})$ is regarded as $0$ for $k=1$. Then $K:\cA^*\to\cH(\C)$ satisfies
$$K(uv)=K(u)\cup(f(u)+K(v))$$
for all $u,v\in \cA^*$. Now applying Theorem \ref{converge}, there exists a unique compact set $K\subset\C$ such that
$$(p(m+1))^{-n}K(\phi^n(0))\overset{d_H}{\longrightarrow}K\quad\text{as }n\to\infty,$$
and $K$ is a continuous image of $[0,1]$. In the following we only need to check $K(\phi^n(0))=P(n)$ for all $n\in\N_0$.
\begin{itemize}
\item[i)] First we prove that for all $a\in \cA$, $j\in\{1,2,\cdots,m\}$ and $n\in\{0,1,2,\cdots\}$ we have
\begin{eqnarray}\label{f}
f(\phi^n(d_{a,0}\cdots d_{a,j-1}))=e^{\frac{a\pi i}{m}}p(j(m+1)^n)
\end{eqnarray}
by induction on $n$. In fact, for $n=0$ we have
$$f(d_{a,0}\cdots d_{a,j-1})=\sum_{k=0}^{j-1}e^\frac{d_{a,k}\pi i}{m}=\sum_{k=0}^{j-1}\delta_ke^\frac{(a+2k)\pi i}{m}=e^\frac{a\pi i}{m}p(j).$$
Suppose that (\ref{f}) is true for some $n\ge0$. Then for $n+1$, on the one hand
$$f(\phi^{n+1}(d_{a,0}\cdots d_{a,j-1}))=L(f(\phi^n(d_{a,0}\cdots d_{a,j-1})))=p(m+1)e^\frac{a\pi i}{m}\sum_{r=0}^{j(m+1)^n-1}\delta_re^\frac{2r\pi i}{m}$$
where the first equality follows from $f\circ\phi=L\circ f$ and the second equality follows from the definition of $L$ and the inductive hypothesis, and on the other hand
$$e^\frac{a\pi i}{m}p(j(m+1)^{n+1})=e^\frac{a\pi i}{m}\sum_{k=0}^{j(m+1)^{n+1}-1}\delta_ke^\frac{2k\pi i}{m}=e^\frac{a\pi i}{m}\sum_{r=0}^{j(m+1)^n-1}\sum_{k=r(m+1)}^{r(m+1)+m}\delta_ke^\frac{2k\pi i}{m}.$$
It suffices to check
$$p(m+1)\delta_re^\frac{2r\pi i}{m}=\sum_{k=r(m+1)}^{r(m+1)+m}\delta_ke^\frac{2k\pi i}{m}$$
for all $r\in\{0,1,\cdots,j(m+1)^n-1\}$. In fact we have
$$\sum_{k=r(m+1)}^{r(m+1)+m}\delta_ke^\frac{2k\pi i}{m}=\sum_{k=0}^m\delta_{r(m+1)+k}e^\frac{2(r(m+1)+k)\pi i}{m}=\sum_{k=0}^m\delta_r\delta_ke^\frac{2r\pi i}{m}e^\frac{2k\pi i}{m}=p(m+1)\delta_re^\frac{2r\pi i}{m},$$
where the second equality follows from $\delta_{r(m+1)+k}=\delta_r\delta_k$ (see \cite[Proposition 3.1 (1)]{L20}).
\item[ii)] To check $K(\phi^n(0))=P(n)$ for all $n\in\N_0$, it suffices to prove
\begin{eqnarray}\label{K}
K(\phi^n(a))=e^\frac{a\pi i}{m}P(n)\quad\text{for all }a\in\cA
\end{eqnarray}
by induction on $n$. In fact, for $n=0$ we have
$$K(a)=[0,e^\frac{a\pi i}{m}]=e^\frac{a\pi i}{m}[0,1]=e^\frac{a\pi i}{m}P(0).$$
Suppose that (\ref{K}) is true for some $n\ge0$. Then for $n+1$, on the one hand
\begin{eqnarray*}
K(\phi^{n+1}(a))&=&K(\phi^n(d_{a,0}\cdots d_{a,m}))\\
&=&\bigcup_{j=0}^m\Big(f(\phi^n(d_{a,0}\cdots d_{a,j-1}))+K(\phi^n(d_{a,j}))\Big)\\
& &\big(\text{where }f(\phi^n(d_{a,0}\cdots d_{a,j-1}))\text{ is regarded as }0\text{ for }j=0\big)\\
&\overset{(*)}{=}&\bigcup_{j=0}^m\Big(e^\frac{a\pi i}{m}p(j(m+1)^n)+e^\frac{d_{a,j}\pi i}{m}P(n)\Big)\\
&=&\bigcup_{j=0}^m\Big(e^\frac{a\pi i}{m}p(j(m+1)^n)+\delta_je^\frac{(a+2j)\pi i}{m}P(n)\Big)\\
&=&e^\frac{a\pi i}{m}\bigcup_{j=0}^m\Big(p(j(m+1)^n)+\delta_je^\frac{2j\pi i}{m}P(n)\Big)
\end{eqnarray*}
where ($*$) follows from the inductive hypothesis and (\ref{f}), and on the other hand
\begin{eqnarray*}
P(n+1)&=&\bigcup_{k=1}^{(m+1)^{n+1}}[p(k-1),p(k)]\\
&=&\bigcup_{j=0}^m\bigcup_{k=j(m+1)^n+1}^{(j+1)(m+1)^n}[p(k-1),p(k)]\\
&=&\bigcup_{j=0}^m\bigcup_{k=1}^{(m+1)^n}[p(j(m+1)^n+k-1),p(j(m+1)^n+k)]\\
&=&\bigcup_{j=0}^m\bigcup_{k=1}^{(m+1)^n}\big[\sum_{r=0}^{j(m+1)^n+k-2}\delta_re^\frac{2r\pi i}{m},\sum_{r=0}^{j(m+1)^n+k-1}\delta_re^\frac{2r\pi i}{m}\big]\\
& &\big(\text{where $\sum_{r=a}^b\cdot$ is regarded as }0\text{ if }a>b\big)\\
&=&\bigcup_{j=0}^m\Big(\sum_{r=0}^{j(m+1)^n-1}\delta_re^\frac{2r\pi i}{m}+\bigcup_{k=1}^{(m+1)^n}\big[\sum_{r=j(m+1)^n}^{j(m+1)^n+k-2}\delta_re^\frac{2r\pi i}{m},\sum_{r=j(m+1)^n}^{j(m+1)^n+k-1}\delta_re^\frac{2r\pi i}{m}\big]\Big)\\
&=&\bigcup_{j=0}^m\Big(p(j(m+1)^n)+\bigcup_{k=1}^{(m+1)^n}\big[\sum_{r=0}^{k-2}\delta_{j(m+1)^n+r}e^\frac{2(j(m+1)^n+r)\pi i}{m},\sum_{r=0}^{k-1}\delta_{j(m+1)^n+r}e^\frac{2(j(m+1)^n+r)\pi i}{m}\big]\Big)\\
&\overset{(**)}{=}&\bigcup_{j=0}^m\Big(p(j(m+1)^n)+\bigcup_{k=1}^{(m+1)^n}\big[\sum_{r=0}^{k-2}\delta_j\delta_re^\frac{2(j+r)\pi i}{m},\sum_{r=0}^{k-1}\delta_j\delta_re^\frac{2(j+r)\pi i}{m}\big]\Big)\\
&=&\bigcup_{j=0}^m\Big(p(j(m+1)^n)+\delta_je^\frac{2j\pi i}{m}\bigcup_{k=1}^{(m+1)^n}[p(k-1),p(k)]\Big)\\
&=&\bigcup_{j=0}^m\Big(p(j(m+1)^n)+\delta_je^\frac{2j\pi i}{m}P(n)\Big)
\end{eqnarray*}
where ($**$) follows from $\delta_{j(m+1)^n+r}=\delta_j\delta_r$ (see \cite[Proposition 3.1 (1)]{L20}). Thus $K(\phi^{n+1}(a))=e^\frac{a\pi i}{m}P(n+1)$.
\end{itemize}
\textcircled{\scriptsize{2}} If $m$ is even, let $\cA:=\{0,1,2,\cdots,m-1\}$. Define the morphism $\phi:\cA^*\to \cA^*$ by
$$a\mapsto d_{a,0}d_{a,1}\cdots d_{a,m}$$
for all $a\in \cA$ where
$$d_{a,k}:=\left\{\begin{array}{lll}
a+k&\text{mod }m&\text{if } \delta_k=+1\\
a+k+\frac{m}{2}&\text{mod }m&\text{if } \delta_k=-1
\end{array}\right.$$
for all $k\in\{0,1,\cdots,m\}$. Obviously $d_{a,0}=a$ for all $a\in \cA$ and it is straightforward to check
$$e^\frac{2d_{a,k}\pi i}{m}=\delta_ke^\frac{2(a+k)\pi i}{m}$$
for all $k\in\{0,1,\cdots,m\}$. Define $f(\epsilon):=0$ and
$$f(w_1\cdots w_n):=\sum_{k=1}^ne^\frac{2w_k\pi i}{m}$$
for any $w_1\cdots w_n\in\cA^*$. Then $f:\cA^*\to\C$ is a homomorphism satisfying
$$f(a)=e^{\frac{2a\pi i}{m}}$$
for all $a\in \cA$ and
$$f(uv)=f(u)+f(v)$$
for all $u,v\in \cA^*$. Let $L:\C\to\C$ and $K:\cA^*\to\cH(\C)$ be defined in the same way as \textcircled{\scriptsize{1}}. Then we can prove
\begin{equation}\label{f2}
f(\phi^n(d_{a,0}\cdots d_{a,j-1}))=e^{\frac{2a\pi i}{m}}p(j(m+1)^n)
\end{equation}
for all $j\in\{1,2,\cdots,m\}$, $a\in \cA$ and $n\in\N_0$, and then
$$K(\phi^n(a))=e^\frac{2a\pi i}{m}P(n).$$
Thus $K(\phi^n(0))= P(n)$ for all $n\in\N_0$. By applying Theorem \ref{converge}, there exists a unique compact set $K\subset\C$ such that
$$(p(m+1))^{-n}P(n)\overset{d_H}{\longrightarrow}K\quad\text{as }n\to\infty,$$
and $K$ is a continuous image of $[0,1]$.
\newline(2) Prove that $K$ is the unique attractor of the IFS $\{S_j\}_{0\le j\le m}$.
\newline By Theorem \ref{attractor} it suffices to show $K=\cup_{j=0}^mS_j(K)$. Let $Q_n:=(p(m+1))^{-n}P(n)$ for all $n\in\N_0$. Since $Q_n\overset{d_H}{\longrightarrow}K$ and Proposition \ref{dH} imply $\cup_{j=0}^mS_j(Q_n)\overset{d_H}{\longrightarrow}\cup_{j=0}^mS_j(K)$ as $n\to\infty$, we only need to prove $Q_{n+1}=\cup_{j=0}^mS_j(Q_n)$ for all $n\in\N_0$ in the following. In fact,
\begin{eqnarray*}
Q_{n+1}&=&(p(m+1))^{-(n+1)}P(n+1)\\
&\overset{(*)}{=}&(p(m+1))^{-(n+1)}\bigcup_{j=0}^m\Big(p(j(m+1)^n)+\delta_je^\frac{2j\pi i}{m}P(n)\Big)\\
&=&\bigcup_{j=0}^m\Big((p(m+1))^{-(n+1)}p(j(m+1)^n)+(p(m+1))^{-(n+1)}\delta_je^\frac{2j\pi i}{m}P(n)\Big)\\
&\overset{(**)}{=}&\bigcup_{j=0}^m\Big((p(m+1))^{-1}p(j)+(p(m+1))^{-(n+1)}\delta_je^\frac{2j\pi i}{m}P(n)\Big)\\
&=&\bigcup_{j=0}^m(p(m+1))^{-1}\Big(p(j)+\delta_je^\frac{2j\pi i}{m}Q_n\Big)\\
&=&\bigcup_{j=0}^mS_j(Q_n),
\end{eqnarray*}
where ($*$) follows from the recurrence relation between $P(n+1)$ and $P(n)$ deduced at the end of the proof in (1) \textcircled{\scriptsize{1}} (noting that this relation is true no matter $m$ is odd or even), and ($**$) follows from $p(0)=0$ and
$$p(j(m+1)^n)\xlongequal[\text{and (\ref{f2})}]{\text{by (\ref{f})}}f(\phi^n(d_{0,0}\cdots d_{0,j-1}))\xlongequal[]{f\circ\phi=L\circ f}L^n(f(d_{0,0}\cdots d_{0,j-1}))\xlongequal[\text{and (\ref{f2})}]{\text{by (\ref{f})}}(p(m+1))^np(j)$$
for all $j\in\{1,2,\cdots,m\}$ and $n\in\N_0$.
\newline(3) Prove that
$$\dim_HK=\frac{\log(m+1)}{\log|p(m+1)|}$$
if and only if there exists $\varepsilon>0$ such that
$$\varliminf_{n\to\infty}\frac{\cL((P(n))^\varepsilon)}{(m+1)^n}>0.$$
Noting that $|\phi^n(0)|=(m+1)^n$, $K(\phi^n(0))=P(n)$ is proved in (1) and $L:\C\to\C$ (regarded as $\R^2\to\R^2$) is a similarity with eigenvalues of the same modulus $|p(m+1)|>1$, by applying \cite[Dekking's conjecture]{B86} (which was proved), we only need to check that the eigenvalue of $M_\phi$ (the corresponding matrix of $\phi$) with greatest modulus is $m+1$ and $\phi$ is primitive. Note that according to whether $m$ is odd or even, the definition of $\phi$ in (1) is different.
\newline\textcircled{\scriptsize{1}} If $m$ is odd, recall $\cA:=\{0,1,2,\cdots,2m-1\}$. On the calculation between the symbols in $\cA$, we consider the mod $2m$ congruence class (for example $5+(2m-3)=2$). Recall the definition of $\phi$. For any $a,b\in\cA$, the equivalences of $d_{a,k}=b$ and $d_{a+1,k}=b+1$ for all $k\in\{0,1,\cdots,m\}$ imply $|\phi(a)|_b=|\phi(a+1)|_{b+1}$. This means that $M_\phi$ is a circulant matrix, and the eigenvalue with greatest modulus is $|\phi(0)|_0+|\phi(0)|_1+\cdots+|\phi(0)|_{2m-1}=|\phi(0)|=m+1$.

In the following we prove that $\phi$ is primitive. That is, there exists $n\in\N$ such that $b\in\phi^n(a)$ for all $a,b\in\cA$, where $u\in v$ means that $u$ occurs in $v$ for any words $u,v\in\cA^*$. For any word $w=w_1\cdots w_k\in \cA^*$ and any symbol $a\in \cA$, write
$$w+a=w_1\cdots w_k+a:=(w_1+a)\cdots(w_k+a).$$
Then we have
\begin{equation}\label{+}
\phi(w+a)=\phi(w_1+a)\cdots\phi(w_k+a)=(\phi(w_1)+a)\cdots(\phi(w_k)+a)=\phi(w)+a,
\end{equation}
where the second equality follows from
$$\phi(b+a)=d_{b+a,0}d_{b+a,1}\cdots d_{b+a,m}=(d_{b,0}+a)(d_{b,1}+a)\cdots(d_{b,m}+a)=\phi(b)+a$$
for any $a,b\in\cA$. By applying (\ref{+}) consecutively, for all $a\in\cA$ and $n\in\N$ we have
\begin{equation}\label{n+}
\phi^n(a)=\phi^{n-1}(\phi(0+a))=\phi^{n-1}(\phi(0)+a)=\phi^{n-2}(\phi(\phi(0)+a))=\phi^{n-2}(\phi^2(0)+a)=\cdots=\phi^n(0)+a,
\end{equation}
and then $b\in\phi^n(a)$ is equivalent to $b-a\in\phi^n(0)$ for all $b\in\cA$. Thus we only need to prove that there exists $n\in\N$ such that $a\in\phi^n(0)$ for all $a\in \cA$.
\begin{itemize}
\item[i)] Suppose $\delta_1=+1$. Then
$$d_{0,1}=2,d_{2,1}=4,d_{4,1}=6,\cdots,d_{2m-4,1}=2m-2,$$
which imply
$$2\in\phi(0),4\in\phi(2),6\in\phi(4),\cdots,2m-2\in\phi(2m-4).$$
By iterating $\phi$ we get
\begin{equation}\label{in}
2\in\phi(0),4\in\phi^2(0),6\in\phi^3(0),\cdots,2m-2\in\phi^{m-1}(0)
\end{equation}
one by one. It follows from
\begin{equation}\label{0in}
0\in\phi(0)\in\phi^2(0)\in\cdots\in\phi^{m-1}(0)\in\phi^m(0)
\end{equation}
that $0,2,4,\cdots,2m-2\in\phi^m(0)$. It suffices to prove $1,3,5,\cdots,2m-1\in\phi^m(0)$ in the following. Since $\delta_1=\cdots=\delta_m=+1$ will imply $p(m+1)=1$ (which contradicts $|p(m+1)|>1$), noting $\delta_1=+1$, there exists $l\in\{2,3,\cdots,m\}$ such that $\delta_l=-1$. This implies
$$d_{0,l}=2l+m,d_{2,l}=2l+m+2,d_{4,l}=2l+m+4,\cdots,d_{2m-2,l}=2l+3m-2$$
and then
$$2l+m\in\phi(0),2l+m+2\in\phi(2),2l+m+4\in\phi(4),\cdots,2l+3m-2\in\phi(2m-2).$$
It follows from (\ref{in}) that
$$2l+m\in\phi(0),2l+m+2\in\phi^2(0),2l+m+4\in\phi^3(0),\cdots,2l+3m-2\in\phi^m(0).$$
By (\ref{0in}) we get $2l+m,2l+m+2,2l+m+4,\cdots,2l+3m-2\in\phi^m(0)$, which is equivalent to $1,3,5,\cdots,2m-1\in\phi^m(0)$. Therefore $a\in\phi^m(0)$ for all $a\in \cA$.
\item[ii)] Suppose $\delta_1=-1$. Then $d_{0,1}=m+2$. By $m+2\in\phi(0)$, we get
$$2(m+2)=m+2+m+2\in\phi(0)+m+2\xlongequal[]{\text{by (\ref{+})}}\phi(m+2)\in\phi^2(0).$$
In the same way we get $3(m+2)\in\phi^3(0),4(m+2)\in\phi^4(0),\cdots,(2m-1)(m+2)\in\phi^{2m-1}(0)$. It follows from
$$0\in\phi(0)\in\phi^2(0)\in\cdots\in\phi^{2m-1}(0)$$
that
\begin{equation}\label{class}
0,m+2,2(m+2),3(m+2),\cdots,(2m-1)(m+2)\in\phi^{2m-1}(0).
\end{equation}
Since $m$ is odd, we know that $m+2$ and $2m$ are relatively prime. This implies that $0,m+2,2(m+2),3(m+2),\cdots,(2m-1)(m+2)$ construct a complete residue system mod $2m$. By (\ref{class}) we get $0,1,2,\cdots,2m-1\in\phi^{2m-1}(0)$.
\end{itemize}
\textcircled{\scriptsize{2}} If $m$ is even, recall $\cA:=\{0,1,2,\cdots,m-1\}$. On the calculation between the symbols in $\cA$, we consider the mod $m$ congruence class (for example $5+(m-3)=2$). Recall the definition of $\phi$. In the same way as \textcircled{\scriptsize{1}}, we know that the eigenvalue of $M_\phi$ with greatest modulus is $m+1$.

In the following it suffices to prove that $\phi$ is primitive. In the same way as \textcircled{\scriptsize{1}}, we get
\begin{equation}\label{even+}
\phi^n(a)=\phi^n(0)+a\quad\text{for all }a\in\cA\text{ and }n\in\N,
\end{equation}
and we only need to prove that there exists $n\in\N$ such that $a\in\phi^n(0)$ for all $a\in \cA$.
\begin{itemize}
\item[i)] Suppose $\delta_1=+1$. Then
$$d_{0,1}=1,d_{1,1}=2,d_{2,1}=3,\cdots,d_{m-2,1}=m-1$$
which imply
$$1\in\phi(0),2\in\phi(1),3\in\phi(2),\cdots,m-1\in\phi(m-2).$$
By iterating $\phi$ we get
$$1\in\phi(0),2\in\phi^2(0),3\in\phi^3(0),\cdots,m-1\in\phi^{m-1}(0)$$
one by one. It follows from
$$0\in\phi(0)\in\phi^2(0)\in\cdots\in\phi^{m-1}(0)$$
that $0,1,2,\cdots,m-1\in\phi^{m-1}(0)$.
\item[ii)] Suppose $\delta_1=-1$. Then $d_{0,1}=\frac{m}{2}+1$ and $d_{\frac{m}{2}+1,1}=2$, which imply $\frac{m}{2}+1\in\phi(0)$ and $2\in\phi(\frac{m}{2}+1)$. It follows from $\phi(\frac{m}{2}+1)\in\phi^2(0)$ that $2\in\phi^2(0)$, and then $\phi^2(2)\in\phi^4(0)$. Since (\ref{even+}) implies $\phi^2(2)=\phi^2(0)+2$, we get $4\in\phi^4(0)$. Repeating this process we get
\begin{equation}\label{246}
2\in\phi^2(0),4\in\phi^4(0),6\in\phi^6(0),\cdots,m-2\in\phi^{m-2}(0).
\end{equation}
It follows from $0\in\phi^2(0)\in\phi^4(0)\in\cdots\in\phi^{m-2}(0)$ that
\begin{equation}\label{024}
0,2,4,\cdots,m-2\in\phi^{m-2}(0).
\end{equation}

First we prove that there exits an odd $a\in\cA$ such that $a\in\phi(0)$ by contradiction. Assume $a\notin\phi(0)$ for all odd $a\in \cA$. By $\phi(0)=d_{0,0}d_{0,1}\cdots d_{0,m}$ we know that $d_{0,0},d_{0,1},\cdots,d_{0,m}$ are all even. Then $d_{0,1}=\frac{m}{2}+1$ implies that $\frac{m}{2}$ is odd. By
$$d_{0,k}:=\left\{\begin{array}{ll}
k & \text{if } \delta_k=+1\\
k+\frac{m}{2} & \text{if } \delta_k=-1
\end{array}\right.$$
for all $k\in\{0,1,\cdots,m\}$, we get
$$\delta_0=\delta_2=\delta_4=\cdots=\delta_m=+1\quad\text{and}\quad\delta_1=\delta_3=\cdots=\delta_{m-1}=-1.$$
It follows that
$$p(m+1)=\sum_{k=0}^m(-1)^ke^\frac{2k\pi i}{m}=1+\sum_{k=1}^\frac{m}{2}(-1)^ke^\frac{2k\pi i}{m}+\sum_{k=\frac{m}{2}+1}^m(-1)^ke^\frac{2k\pi i}{m},$$
where
$$\sum_{k=\frac{m}{2}+1}^m(-1)^ke^\frac{2k\pi i}{m}=\sum_{k=1}^\frac{m}{2}(-1)^{\frac{m}{2}+k}e^\frac{2(\frac{m}{2}+k)\pi i}{m}=\sum_{k=1}^\frac{m}{2}(-1)^{k+1}e^{\pi i}e^\frac{2k\pi i}{m}=\sum_{k=1}^\frac{m}{2}(-1)^ke^\frac{2k\pi i}{m}$$
and
\begin{eqnarray*}
\sum_{k=1}^\frac{m}{2}(-1)^ke^\frac{2k\pi i}{m}&=&\sum_{j=0}^{\frac{1}{2}(\frac{m}{2}-1)}(-1)^{2j+1}e^\frac{2(2j+1)\pi i}{m}+\sum_{j=1}^{\frac{1}{2}(\frac{m}{2}-1)}(-1)^{2j}e^\frac{2(2j)\pi i}{m}\\
&=&\sum_{j=0}^{\frac{1}{2}(\frac{m}{2}-1)}e^{\frac{2(2j+1)\pi i}{m}+\pi i}+\sum_{j=1}^{\frac{1}{2}(\frac{m}{2}-1)}e^\frac{4j\pi i}{m}\\
&=&\sum_{j=0}^{\frac{1}{2}(\frac{m}{2}-1)}e^\frac{4(\frac{1}{2}(\frac{m}{2}+1)+j)\pi i}{m}+\sum_{j=1}^{\frac{1}{2}(\frac{m}{2}-1)}e^\frac{4j\pi i}{m}\\
&=&\sum_{j=\frac{1}{2}(\frac{m}{2}+1)}^{\frac{m}{2}}e^\frac{4j\pi i}{m}+\sum_{j=1}^{\frac{1}{2}(\frac{m}{2}-1)}e^\frac{4j\pi i}{m}=\sum_{j=1}^\frac{m}{2}e^\frac{2j\pi i}{\frac{m}{2}}=0.
\end{eqnarray*}
This implies $p(m+1)=1$, which contradicts $|p(m+1)|>1$. Thus there must exist an odd $a\in \cA$ such that $a\in\phi(0)$, which implies
$$\phi^2(a)\in\phi^3(0),\phi^4(a)\in\phi^5(0),\cdots,\phi^{m-2}(a)\in\phi^{m-1}(0).$$
It follows from $\phi(0)\in\phi^3(0)\in\phi^5(0)\in\cdots\in\phi^{m-1}(0)$ that
\begin{equation}\label{a24}
a,\phi^2(a),\phi^4(a),\cdots,\phi^{m-2}(a)\in\phi^{m-1}(0).
\end{equation}
Since (\ref{even+}) implies
$$\phi^2(a)=\phi^2(0)+a,\phi^4(a)=\phi^4(0)+a,\cdots,\phi^{m-2}(a)=\phi^{m-2}(0)+a,$$
by (\ref{246}) we get
$$a+2\in\phi^2(a),a+4\in\phi^4(a),\cdots,a+m-2\in\phi^{m-2}(a).$$
It follows from (\ref{a24}) that $a,a+2,a+4,\cdots,a+m-2\in\phi^{m-1}(0)$. Recalling that $a$ is odd, we get $1,3,5,\cdots,m-1\in\phi^{m-1}(0)$. Since $0\in\phi(0)$ implies $\phi^{m-2}(0)\in\phi^{m-1}(0)$, by (\ref{024}) we get $0,2,4,\cdots,m-2\in\phi^{m-1}(0)$. Therefore $0,1,2,3,\cdots,m-1\in\phi^{m-1}(0)$.
\end{itemize}
\end{proof}

\begin{proof}[Proof of Corollary \ref{cor}] Let $m\ge2$ be an integer, $\delta_0=\cdots=\delta_{\lfloor\frac{m}{4}\rfloor}=+1$, $\delta_{\lfloor\frac{m}{4}\rfloor+1}=\cdots=\delta_{m-\lfloor\frac{m}{4}\rfloor-1}=-1$, $\delta_{m-\lfloor\frac{m}{4}\rfloor}=\cdots=\delta_m=+1$ and $\delta=(\delta_n)_{n\ge0}$ be the $(+1,\delta_1,\cdots,\delta_m)$-Thue-Morse sequence.
\newline(1) Prove $3\le p(m+1)\le m+1$. In fact, by
$$p(m+1)=\sum_{k=0}^m\delta_ke^\frac{2k\pi i}{m}=\sum_{k=0}^m\delta_k\cos\frac{2k\pi}{m}+i\sum_{k=0}^m\delta_k\sin\frac{2k\pi}{m},$$
it suffices to consider the following \textcircled{\scriptsize{1}} and \textcircled{\scriptsize{2}}.
\begin{itemize}
\item[\textcircled{\scriptsize{1}}] We have $\sum_{k=0}^m\delta_k\sin\frac{2k\pi}{m}=0$ since for all $k\in\{0,1,\cdots,\lfloor\frac{m}{2}\rfloor\}$,
$$\delta_k\sin\frac{2k\pi}{m}+\delta_{m-k}\sin\frac{2(m-k)\pi}{m}\xlongequal[]{\delta_k=\delta_{m-k}}\delta_k(\sin\frac{2k\pi}{m}+\sin(2\pi-\frac{2k\pi}{m}))=0.$$
\item[\textcircled{\scriptsize{2}}] Prove $3\le\sum_{k=0}^m\delta_k\cos\frac{2k\pi}{m}\le m+1$.
\newline Since $\delta_k\cos\frac{2k\pi}{m}=1$ for $k\in\{0,m\}$ and $\delta_k\cos\frac{2k\pi}{m}\le1$ for $k\in\{1,2\cdots,m-1\}$, we only need to check $\sum_{k=1}^{m-1}\delta_k\cos\frac{2k\pi}{m}\ge1$. It suffices to consider the following i) and ii).
\newline i) Prove $\delta_k\cos\frac{2k\pi}{m}\ge0$ for all $k\in\{1,\cdots,m-1\}$.
\begin{itemize}
\item[\textcircled{\scriptsize{a}}] If $0\le k\le\lfloor\frac{m}{4}\rfloor$, we have $\delta_k=+1$ and $0\le\frac{2k\pi}{m}\le\frac{\pi}{2}$.
\item[\textcircled{\scriptsize{b}}] If $\lfloor\frac{m}{4}\rfloor+1\le k\le m-\lfloor\frac{m}{4}\rfloor-1$, we have $\delta_k=-1$ and $\frac{\pi}{2}\le\frac{2k\pi}{m}\le\frac{3\pi}{2}$.
\item[\textcircled{\scriptsize{c}}] If $m-\lfloor\frac{m}{4}\rfloor\le k\le m$, we have $\delta_k=+1$ and $\frac{3\pi}{2}\le\frac{2k\pi}{m}\le2\pi$.
\end{itemize}
\begin{itemize}
\item[ii) \textcircled{\scriptsize{a}}] If $m$ is even, we have $\delta_\frac{m}{2}\cos\frac{2\cdot\frac{m}{2}\cdot\pi}{m}=1$.
\item[\textcircled{\scriptsize{b}}] If $m$ is odd, we have
$$\delta_\frac{m-1}{2}\cos\frac{2\cdot\frac{m-1}{2}\cdot\pi}{m}+\delta_\frac{m+1}{2}\cos\frac{2\cdot\frac{m+1}{2}\cdot\pi}{m}=-\cos(\pi-\frac{\pi}{m})-\cos(\pi+\frac{\pi}{m})=2\cos\frac{\pi}{m}\ge2\cos\frac{\pi}{3}=1.$$
\end{itemize}
\end{itemize}
(2) Since Theorem \ref{main} says that the $(+1,\delta_1,\cdots,\delta_m)$-Koch curve is the unique attractor of the $(+1,\delta_1,\cdots,\delta_m)$-IFS $\{S_j\}_{0\le j\le m}$, to complete the proof, by applying Theorem \ref{OSC}, it suffices to check that $\{S_j\}_{0\le j\le m}$ satisfies the OSC.

When $m=2$, we have $\delta_0=+1,\delta_1=-1,\delta_2=+1$, $p(m+1)=3$, $S_0(z)=\frac{z}{3}$, $S_1(z)=\frac{z}{3}+\frac{1}{3}$ and $S_2(z)=\frac{z}{3}+\frac{2}{3}$ for $z\in\C$, and we can take the open set $\{x+yi:x,y\in(0,1)\}$.

When $m=3$, we have $\delta_0=+1,\delta_1=\delta_2=-1,\delta_3=+1$, $p(m+1)=3$, $S_0(z)=\frac{z}{3}$, $S_1(z)=\frac{1}{3}-\frac{z}{3}e^\frac{2\pi i}{3}$, $S_2(z)=\frac{1}{3}-\frac{1}{3}e^\frac{2\pi i}{3}-\frac{z}{3}e^\frac{4\pi i}{3}$ and $S_3(z)=\frac{z}{3}+\frac{2}{3}$ for $z\in\C$. The attractor of this IFS is exactly the classical Koch curve and this IFS satisfies the OSC, where the open set can be taken by the open isosceles triangle $\{x+yi:x,y\in\R,y<0,x+\sqrt{3}y>0,x-\sqrt{3}y<1\}$.

In the following we consider $m\ge4$. Let
$$a_m:=\sum_{k=0}^{\lfloor\frac{m}{4}\rfloor}\cos\frac{2k\pi}{m}\quad\text{and}\quad b_m:=\sum_{k=0}^{\lfloor\frac{m}{4}\rfloor}\sin\frac{2k\pi}{m}.$$
Then $a_m,b_m>0$ and $p(\lfloor\frac{m}{4}\rfloor+1)=a_m+b_mi$.
\newline\textcircled{\scriptsize{1}} If $m\equiv0,1$ or $2$ mod $4$, define
$$V:=\Big\{x+yi:x,y\in\R,y>0,b_mx-a_my>0,b_mx+a_my<b_m\Big\}.$$
See Figures \ref{0}, \ref{1} and \ref{2}. Obviously $V$ is the non-empty bounded open isosceles triangle with base $[0,1]$ and vertex $\frac{1}{2}+\frac{b_m}{2a_m}i$. Note that for each $j\in\{0,1,\cdots,m\}$, $S_j$ is the composition of the rotation $\delta_je^\frac{2j\pi i}{m}\cdot$, the scaling $(p(m+1))^{-1}\cdot$ and the translation $\cdot+\frac{p(j)}{p(m+1)}$, and $S_j$ maps $[0,1]$ to $[\frac{p(j)}{p(m+1)},\frac{p(j+1)}{p(m+1)}]$. It is straightforward to see that $\{S_j(V)\}_{0\le j\le m}$ are the disjoint open isosceles triangles with bases $\{[\frac{p(j)}{p(m+1)},\frac{p(j+1)}{p(m+1)}]\}_{0\le j\le m}$ and vertexes $\{S_j(\frac{1}{2}+\frac{b_m}{2a_m}i)\}_{0\le j\le m}$ all on the upper side of the polygonal line $\frac{P(1)}{p(m+1)}$. To verify $\cup_{j=0}^mS_j(V)\subset V$, in the following we check Im $p(\frac{m+1}{2})\ge0$ if $m$ is odd and Im $p(\frac{m}{2})\ge0$ if $m$ is even.
\begin{itemize}
\item[i)] If $m$ is odd, by $m\equiv1$ mod $4$, we have $\frac{m-1}{2}=2\lfloor\frac{m}{4}\rfloor$ and then
\begin{eqnarray*}
\text{Im }p(\frac{m+1}{2})&=&\sum_{k=0}^{\lfloor\frac{m}{4}\rfloor}\sin\frac{2k\pi}{m}-\sum_{k=\lfloor\frac{m}{4}\rfloor+1}^{\frac{m+1}{2}-1}\sin\frac{2k\pi}{m}=\sum_{k=1}^{\lfloor\frac{m}{4}\rfloor}\sin\frac{2k\pi}{m}-\sum_{k=\lfloor\frac{m}{4}\rfloor+1}^{2\lfloor\frac{m}{4}\rfloor}\sin\frac{2k\pi}{m}\\
&=&\sum_{k=1}^{\lfloor\frac{m}{4}\rfloor}\sin\frac{2k\pi}{m}-\sum_{k=1}^{\lfloor\frac{m}{4}\rfloor}\sin\frac{2(2\lfloor\frac{m}{4}\rfloor+1-k)\pi}{m}=\sum_{k=1}^{\lfloor\frac{m}{4}\rfloor}(\sin\frac{2k\pi}{m}-\sin\frac{(2k-1)\pi}{m})\ge0.
\end{eqnarray*}
\item[ii)] If $m$ is even and $m\equiv2$ mod $4$, by $\frac{m}{2}-1=2\lfloor\frac{m}{4}\rfloor$, in a way similar to i) we can get Im $p(\frac{m}{2})=0$.
\item[iii)] If $m$ is even and $m\equiv0$ mod $4$, we have
\begin{eqnarray*}
\text{Im }p(\frac{m}{2})&=&\sum_{k=0}^\frac{m}{4}\sin\frac{2k\pi}{m}-\sum_{k=\frac{m}{4}+1}^{\frac{m}{2}-1}\sin\frac{2k\pi}{m}=\sum_{k=1}^{\frac{m}{4}-1}\sin\frac{2k\pi}{m}+\sin\frac{2\cdot\frac{m}{4}\cdot\pi}{m}-\sum_{k=1}^{\frac{m}{4}-1}\sin\frac{2(\frac{m}{2}-k)\pi}{m}\\
&=&1+\sum_{k=1}^{\frac{m}{4}-1}(\sin\frac{2k\pi}{m}-\sin\frac{(m-2k)\pi}{m})=1\ge0.
\end{eqnarray*}
\end{itemize}
\textcircled{\scriptsize{2}} If $m\equiv3$ mod $4$, we have $\frac{m+1}{2}-1=2\lfloor\frac{m}{4}\rfloor+1$ and then
\begin{eqnarray*}
\text{Im }p(\frac{m+1}{2})&=&\sum_{k=0}^{\lfloor\frac{m}{4}\rfloor}\sin\frac{2k\pi}{m}-\sum_{k=\lfloor\frac{m}{4}\rfloor+1}^{2\lfloor\frac{m}{4}\rfloor+1}\sin\frac{2k\pi}{m}=\sum_{k=0}^{\lfloor\frac{m}{4}\rfloor}\sin\frac{2k\pi}{m}-\sum_{k=0}^{\lfloor\frac{m}{4}\rfloor}\sin\frac{2(2\lfloor\frac{m}{4}\rfloor+1-k)\pi}{m}\\
&=&\sum_{k=0}^{\lfloor\frac{m}{4}\rfloor}(\sin\frac{2k\pi}{m}-\sin\frac{(2k+1)\pi}{m})<0.
\end{eqnarray*}
Let
$$c_m:=-\frac{\text{Im }p(\frac{m+1}{2})}{p(m+1)}>0$$
and define
$$V:=\Big\{x+yi:x,y\in\R,b_mx-a_my>0,b_mx+a_my<b_m,2c_mx+y>0,2c_mx-y<2c_m\Big\}.$$
See Figure \ref{3}. Obviously $V$ is the non-empty bounded open quadrilateral containing two isosceles triangles with the same base $[0,1]$ and one has vertex $\frac{1}{2}+\frac{b_m}{2a_m}i$ and the other has vertex $\frac{1}{2}-c_mi$. It is straightforward to see that $\{S_j(V)\}_{0\le j\le m}$ are open quadrilaterals, each containing two isosceles triangles with the same base $[\frac{p(j)}{p(m+1)},\frac{p(j+1)}{p(m+1)}]$ where one triangle has vertex $S_j(\frac{1}{2}+\frac{b_m}{2a_m}i)$ on the upper side of the polygon $\frac{P(1)}{p(m+1)}$ and the other has vertex $S_j(\frac{1}{2}-c_mi)$ on the lower side. By simple geometrical relation we know that $\{S_j(V)\}_{0\le j\le m}$ are all disjoint and contained in $V$.
\end{proof}

\begin{figure}[H]
\begin{tikzpicture}[scale = 15/11.056]
\draw(0.3,0) arc (0:45:0.3);
\draw(1.4,0) arc (0:22.5:0.4);
\draw(1+0.924+0.707+0.383+0,0+0.383+0.707+0.924+0.6) arc (-90:-67.5:0.4);
\draw node[below]at(0,0){$0$};
\draw node[below]at(11.056,0){$1$};
\draw node[font=\tiny]at(0.4,0.15){$\frac{\pi}{4}$};
\draw node[font=\tiny]at(1.5,-0.2){$\frac{2\pi}{m}$};
\draw node[font=\tiny]at(1+0.924+0.707+0.383+0+0.125,0+0.383+0.707+0.924+0.7-0.4){$\frac{2\pi}{m}$};
\draw[-](0,0)--(1,0);
\draw[-](1,0)--(1+0.924,0+0.383);
\draw node[outer sep=0pt,font=\tiny]at(1+0.924+0.17675,0+0.383+0.17675){.};
\draw node[outer sep=0pt,font=\tiny]at(1+0.924+0.17675+0.17675,0+0.383+0.17675+0.17675){.};
\draw node[outer sep=0pt,font=\tiny]at(1+0.924+0.17675+0.17675+0.17675,0+0.383+0.17675+0.17675+0.17675){.};
\draw[-](1+0.924+0.707,0+0.383+0.707)--(1+0.924+0.707+0.383,0+0.383+0.707+0.924);
\draw[-](1+0.924+0.707+0.383,0+0.383+0.707+0.924)--(1+0.924+0.707+0.383+0,0+0.383+0.707+0.924+1);
\draw[-](1+0.924+0.707+0.383+0,0+0.383+0.707+0.924+1)--(1+0.924+0.707+0.383+0+0.383,0+0.383+0.707+0.924+1-0.924);
\draw[-](1+0.924+0.707+0.383+0+0.383,0+0.383+0.707+0.924+1-0.924)--(1+0.924+0.707+0.383+0+0.383+0.707,0+0.383+0.707+0.924+1-0.924-0.707);
\draw node[outer sep=0pt,font=\tiny]at(1+0.924+0.707+0.383+0+0.383+0.707+0.2355,0+0.383+0.707+0.924+1-0.924-0.707-0.09575){.};
\draw node[outer sep=0pt,font=\tiny]at(1+0.924+0.707+0.383+0+0.383+0.707+0.2355+0.2355,0+0.383+0.707+0.924+1-0.924-0.707-0.09575-0.09575){.};
\draw node[outer sep=0pt,font=\tiny]at(1+0.924+0.707+0.383+0+0.383+0.707+0.2355+0.2355+0.2355,0+0.383+0.707+0.924+1-0.924-0.707-0.09575-0.09575-0.09575){.};
\draw[-](1+0.924+0.707+0.383+0+0.383+0.707+0.942,0+0.383+0.707+0.924+1-0.924-0.707-0.383)--(1+0.924+0.707+0.383+0+0.383+0.707+0.942+1,0+0.383+0.707+0.924+1-0.924-0.707-0.383);
\draw node[outer sep=0pt,font=\tiny]at(1+0.924+0.707+0.383+0+0.383+0.707+0.942+1+0.231,0+0.383+0.707+0.924+1-0.924-0.707-0.383+0+0.09575){.};
\draw node[outer sep=0pt,font=\tiny]at(1+0.924+0.707+0.383+0+0.383+0.707+0.942+1+0.231+0.231,0+0.383+0.707+0.924+1-0.924-0.707-0.383+0+0.09575+0.09575){.};
\draw node[outer sep=0pt,font=\tiny]at(1+0.924+0.707+0.383+0+0.383+0.707+0.942+1+0.231+0.231+0.231,0+0.383+0.707+0.924+1-0.924-0.707-0.383+0+0.09575+0.09575+0.09575){.};
\draw[-](1+0.924+0.707+0.383+0+0.383+0.707+0.942+1+0.924,0+0.383+0.707+0.924+1-0.924-0.707)--(1+0.924+0.707+0.383+0+0.383+0.707+0.942+1+0.924+0.707,0+0.383+0.707+0.924+1-0.924);
\draw[-](1+0.924+0.707+0.383+0+0.383+0.707+0.942+1+0.924+0.707,0+0.383+0.707+0.924+1-0.924)--(1+0.924+0.707+0.383+0+0.383+0.707+0.942+1+0.924+0.707+0.383,0+0.383+0.707+0.924+1);
\draw[-](1+0.924+0.707+0.383+0+0.383+0.707+0.942+1+0.924+0.707+0.383,0+0.383+0.707+0.924+1)--(1+0.924+0.707+0.383+0+0.383+0.707+0.942+1+0.924+0.707+0.383+0,0+0.383+0.707+0.924);
\draw[-](1+0.924+0.707+0.383+0+0.383+0.707+0.942+1+0.924+0.707+0.383+0,0+0.383+0.707+0.924)--(1+0.924+0.707+0.383+0+0.383+0.707+0.942+1+0.924+0.707+0.383+0+0.383,0+0.383+0.707);
\draw node[outer sep=0pt,font=\tiny]at(1+0.924+0.707+0.383+0+0.383+0.707+0.942+1+0.924+0.707+0.383+0+0.383+0.17675,0+0.383+0.707-0.17675){.};
\draw node[outer sep=0pt,font=\tiny]at(1+0.924+0.707+0.383+0+0.383+0.707+0.942+1+0.924+0.707+0.383+0+0.383+0.17675+0.17675,0+0.383+0.707-0.17675-0.17675){.};
\draw node[outer sep=0pt,font=\tiny]at(1+0.924+0.707+0.383+0+0.383+0.707+0.942+1+0.924+0.707+0.383+0+0.383+0.17675+0.17675+0.17675,0+0.383+0.707-0.17675-0.17675-0.17675){.};
\draw[-](1+0.924+0.707+0.383+0+0.383+0.707+0.942+1+0.924+0.707+0.383+0+0.383+0.707,0+0.383)--(1+0.924+0.707+0.383+0+0.383+0.707+0.942+1+0.924+0.707+0.383+0+0.383+0.707+0.924,0);
\draw[-](1+0.924+0.707+0.383+0+0.383+0.707+0.942+1+0.924+0.707+0.383+0+0.383+0.707+0.924,0)--(1+0.924+0.707+0.383+0+0.383+0.707+0.942+1+0.924+0.707+0.383+0+0.383+0.707+0.924+1,0);
\draw[dashed](1,0)--(10.056,0);
\draw[dashed](0,0)--(5.528,5.528);
\draw[dashed](5.528,5.528)--(11.056,0);
\draw[dashed](0.5,0.5)--(1,0);
\draw[dashed](1,0)--(1+0.271,0+0.653);
\draw[dashed](1+0.271,0+0.653)--(1+0.924,0+0.383);
\draw[dashed](1+0.924+0.707,0+0.383+0.707)--(1+0.924+0.707-0.271,0+0.383+0.707+0.604);
\draw[dashed](1+0.924+0.707-0.271,0+0.383+0.707+0.604)--(1+0.924+0.707+0.383,0+0.383+0.707+0.924);
\draw[dashed](1+0.924+0.707+0.383,0+0.383+0.707+0.924)--(1+0.924+0.707+0.383-0.5,0+0.383+0.707+0.924+0.5);
\draw[dashed](1+0.924+0.707+0.383+0,0+0.383+0.707+0.924+1)--(1+0.924+0.707+0.383+0+0.653,0+0.383+0.707+0.924+1-0.271);
\draw[dashed](1+0.924+0.707+0.383+0+0.653,0+0.383+0.707+0.924+1-0.271)--(1+0.924+0.707+0.383+0+0.383,0+0.383+0.707+0.924+1-0.924);
\draw[dashed](1+0.924+0.707+0.383+0+0.383,0+0.383+0.707+0.924+1-0.924)--(1+0.924+0.707+0.383+0+0.383+0.707,0+0.383+0.707+0.924+1-0.924+0);
\draw[dashed](1+0.924+0.707+0.383+0+0.383+0.707,0+0.383+0.707+0.924+1-0.924+0)--(1+0.924+0.707+0.383+0+0.383+0.707,0+0.383+0.707+0.924+1-0.924-0.707);
\draw[dashed](11.056-0.5,0.5)--(11.056-1,0);
\draw[dashed](11.056-1,0)--(11.056-1-0.271,0+0.653);
\draw[dashed](11.056-1-0.271,0+0.653)--(11.056-1-0.924,0+0.383);
\draw[dashed](11.056-1-0.924-0.707,0+0.383+0.707)--(11.056-1-0.924-0.707+0.271,0+0.383+0.707+0.604);
\draw[dashed](11.056-1-0.924-0.707+0.271,0+0.383+0.707+0.604)--(11.056-1-0.924-0.707-0.383,0+0.383+0.707+0.924);
\draw[dashed](11.056-1-0.924-0.707-0.383,0+0.383+0.707+0.924)--(11.056-1-0.924-0.707-0.383+0.5,0+0.383+0.707+0.924+0.5);
\draw[dashed](11.056-1-0.924-0.707-0.383-0,0+0.383+0.707+0.924+1)--(11.056-1-0.924-0.707-0.383-0-0.653,0+0.383+0.707+0.924+1-0.271);
\draw[dashed](11.056-1-0.924-0.707-0.383-0-0.653,0+0.383+0.707+0.924+1-0.271)--(11.056-1-0.924-0.707-0.383-0-0.383,0+0.383+0.707+0.924+1-0.924);
\draw[dashed](11.056-1-0.924-0.707-0.383-0-0.383,0+0.383+0.707+0.924+1-0.924)--(11.056-1-0.924-0.707-0.383-0-0.383-0.707,0+0.383+0.707+0.924+1-0.924+0);
\draw[dashed](11.056-1-0.924-0.707-0.383-0-0.383-0.707,0+0.383+0.707+0.924+1-0.924+0)--(11.056-1-0.924-0.707-0.383-0-0.383-0.707,0+0.383+0.707+0.924+1-0.924-0.707);
\draw[dashed](11.056-1-0.924-0.707-0.383-0-0.383-0.707-0.942,0+0.383+0.707+0.924+1-0.924-0.707-0.383)--(11.056-1-0.924-0.707-0.383-0-0.383-0.707-0.942-0.5,0+0.383+0.707+0.924+1-0.924-0.707-0.383+0.5);
\draw[dashed](11.056-1-0.924-0.707-0.383-0-0.383-0.707-0.942-0.5,0+0.383+0.707+0.924+1-0.924-0.707-0.383+0.5)--(11.056-1-0.924-0.707-0.383-0-0.383-0.707-0.942-1,0+0.383+0.707+0.924+1-0.924-0.707-0.383);
\end{tikzpicture}
\caption{The open sets $V,S_0(V),\cdots,S_m(V)$ and geometrical relation for $m\equiv0$ mod $4$ where $m\ge4$.}\label{0}
\end{figure}

\begin{figure}[H]
\begin{tikzpicture}[scale = 15/11.836]
\draw(0.3,0) arc (0:42.4:0.3);
\draw(1.4,0) arc (0:21.2:0.4);
\draw(1+0.932+0.738+0.446+0.092-0.05,0+0.362+0.674+0.895+0.996-0.4) arc (-100:-75:0.4);
\draw node[below]at(0,0){$0$};
\draw node[below]at(11.836,0){$1$};
\draw node[font=\tiny]at(0.1,0.4){$\frac{(m-1)\pi}{4m}$};
\draw node[font=\tiny]at(1.5,-0.2){$\frac{2\pi}{m}$};
\draw node[font=\tiny]at(1+0.932+0.738+0.446+0.092+0.07,0+0.362+0.674+0.895+0.996-0.7){$\frac{2\pi}{m}$};
\draw[-](0,0)--(1,0);
\draw[-](1,0)--(1+0.932,0+0.362);
\draw node[outer sep=0pt,font=\tiny]at(1+0.932+0.1845,0+0.362+0.1685){.};
\draw node[outer sep=0pt,font=\tiny]at(1+0.932+0.1845+0.1845,0+0.362+0.1685+0.1685){.};
\draw node[outer sep=0pt,font=\tiny]at(1+0.932+0.1845+0.1845+0.1845,0+0.362+0.1685+0.1685+0.1685){.};
\draw[-](1+0.932+0.738,0+0.362+0.674)--(1+0.932+0.738+0.446,0+0.362+0.674+0.895);
\draw[-](1+0.932+0.738+0.446,0+0.362+0.674+0.895)--(1+0.932+0.738+0.446+0.092,0+0.362+0.674+0.895+0.996);
\draw[-](1+0.932+0.738+0.446+0.092,0+0.362+0.674+0.895+0.996)--(1+0.932+0.738+0.446+0.092+0.274,0+0.362+0.674+0.895+0.996-0.962);
\draw[-](1+0.932+0.738+0.446+0.092+0.274,0+0.362+0.674+0.895+0.996-0.962)--(1+0.932+0.738+0.446+0.092+0.274+0.603,0+0.362+0.674+0.895+0.996-0.962-0.798);
\draw node[outer sep=0pt,font=\tiny]at(1+0.932+0.738+0.446+0.092+0.274+0.603+0.2125,0+0.362+0.674+0.895+0.996-0.962-0.798-0.13175){.};
\draw node[outer sep=0pt,font=\tiny]at(1+0.932+0.738+0.446+0.092+0.274+0.603+0.2125+0.2125,0+0.362+0.674+0.895+0.996-0.962-0.798-0.13175-0.13175){.};
\draw node[outer sep=0pt,font=\tiny]at(1+0.932+0.738+0.446+0.092+0.274+0.603+0.2125+0.2125+0.2125,0+0.362+0.674+0.895+0.996-0.962-0.798-0.13175-0.13175-0.13175){.};
\draw[-](1+0.932+0.738+0.446+0.092+0.274+0.603+0.850,0+0.362+0.674+0.895+0.996-0.962-0.798-0.527)--(1+0.932+0.738+0.446+0.092+0.274+0.603+0.850+0.983,0+0.362+0.674+0.895+0.996-0.962-0.798-0.527-0.184);
\draw[-](1+0.932+0.738+0.446+0.092+0.274+0.603+0.850+0.983,0+0.362+0.674+0.895+0.996-0.962-0.798-0.527-0.184)--(1+0.932+0.738+0.446+0.092+0.274+0.603+0.850+0.983+0.983,0+0.362+0.674+0.895+0.996-0.962-0.798-0.527);
\draw node[outer sep=0pt,font=\tiny]at(1+0.932+0.738+0.446+0.092+0.274+0.603+0.850+0.983+0.983+0.2125,0+0.362+0.674+0.895+0.996-0.962-0.798-0.527+0.13175){.};
\draw node[outer sep=0pt,font=\tiny]at(1+0.932+0.738+0.446+0.092+0.274+0.603+0.850+0.983+0.983+0.2125+0.2125,0+0.362+0.674+0.895+0.996-0.962-0.798-0.527+0.13175+0.13175){.};
\draw node[outer sep=0pt,font=\tiny]at(1+0.932+0.738+0.446+0.092+0.274+0.603+0.850+0.983+0.983+0.2125+0.2125+0.2125,0+0.362+0.674+0.895+0.996-0.962-0.798-0.527+0.13175+0.13175+0.13175){.};
\draw[-](1+0.932+0.738+0.446+0.092+0.274+0.603+0.850+0.983+0.983+0.850,0+0.362+0.674+0.895+0.996-0.962-0.798)--(1+0.932+0.738+0.446+0.092+0.274+0.603+0.850+0.983+0.983+0.850+0.603,0+0.362+0.674+0.895+0.996-0.962);
\draw[-](1+0.932+0.738+0.446+0.092+0.274+0.603+0.850+0.983+0.983+0.850+0.603,0+0.362+0.674+0.895+0.996-0.962)--(1+0.932+0.738+0.446+0.092+0.274+0.603+0.850+0.983+0.983+0.850+0.603+0.274,0+0.362+0.674+0.895+0.996);
\draw[-](1+0.932+0.738+0.446+0.092+0.274+0.603+0.850+0.983+0.983+0.850+0.603+0.274,0+0.362+0.674+0.895+0.996)--(1+0.932+0.738+0.446+0.092+0.274+0.603+0.850+0.983+0.983+0.850+0.603+0.274+0.092,0+0.362+0.674+0.895);
\draw[-](1+0.932+0.738+0.446+0.092+0.274+0.603+0.850+0.983+0.983+0.850+0.603+0.274+0.092,0+0.362+0.674+0.895)--(1+0.932+0.738+0.446+0.092+0.274+0.603+0.850+0.983+0.983+0.850+0.603+0.274+0.092+0.446,0+0.362+0.674);
\draw node[outer sep=0pt,font=\tiny]at(1+0.932+0.738+0.446+0.092+0.274+0.603+0.850+0.983+0.983+0.850+0.603+0.274+0.092+0.446+0.1845,0+0.362+0.674-0.1685){.};
\draw node[outer sep=0pt,font=\tiny]at(1+0.932+0.738+0.446+0.092+0.274+0.603+0.850+0.983+0.983+0.850+0.603+0.274+0.092+0.446+0.1845+0.1845,0+0.362+0.674-0.1685-0.1685){.};
\draw node[outer sep=0pt,font=\tiny]at(1+0.932+0.738+0.446+0.092+0.274+0.603+0.850+0.983+0.983+0.850+0.603+0.274+0.092+0.446+0.1845+0.1845+0.1845,0+0.362+0.674-0.1685-0.1685-0.1685){.};
\draw[-](1+0.932+0.738+0.446+0.092+0.274+0.603+0.850+0.983+0.983+0.850+0.603+0.274+0.092+0.446+0.738,0+0.362)--(1+0.932+0.738+0.446+0.092+0.274+0.603+0.850+0.983+0.983+0.850+0.603+0.274+0.092+0.446+0.738+0.932,0);
\draw[-](1+0.932+0.738+0.446+0.092+0.274+0.603+0.850+0.983+0.983+0.850+0.603+0.274+0.092+0.446+0.738+0.932,0)--(1+0.932+0.738+0.446+0.092+0.274+0.603+0.850+0.983+0.983+0.850+0.603+0.274+0.092+0.446+0.738+0.932+1,0);
\draw[dashed](1,0)--(10.836,0);
\draw[dashed](0,0)--(5.918,5.404);
\draw[dashed](5.918,5.404)--(11.836,0);
\draw[dashed](0.5,0.457)--(1,0);
\draw[dashed](1,0)--(1+0.302,0+0.606);
\draw[dashed](1+0.302,0+0.606)--(1+0.932,0+0.362);
\draw[dashed](1+0.932+0.738,0+0.362+0.674)--(1+0.932+0.738-0.185,0+0.362+0.674+0.651);
\draw[dashed](1+0.932+0.738-0.185,0+0.362+0.674+0.651)--(1+0.932+0.738+0.446,0+0.362+0.674+0.895);
\draw[dashed](1+0.932+0.738+0.446,0+0.362+0.674+0.895)--(1+0.932+0.738+0.44-0.408,0+0.362+0.674+0.895+0.540);
\draw[dashed](1+0.932+0.738+0.446+0.092,0+0.362+0.674+0.895+0.996)--(1+0.932+0.738+0.446+0.092+0.575,0+0.362+0.674+0.895+0.996-0.357);
\draw[dashed](1+0.932+0.738+0.446+0.092+0.575,0+0.362+0.674+0.895+0.996-0.357)--(1+0.932+0.738+0.446+0.092+0.274,0+0.362+0.674+0.895+0.996-0.962);
\draw[dashed](1+0.932+0.738+0.446+0.092+0.274,0+0.362+0.674+0.895+0.996-0.962)--(1+0.932+0.738+0.446+0.092+0.274+0.665,0+0.362+0.674+0.895+0.996-0.962-0.125);
\draw[dashed](1+0.932+0.738+0.446+0.092+0.274+0.665,0+0.362+0.674+0.895+0.996-0.962-0.125)--(1+0.932+0.738+0.446+0.092+0.274+0.603,0+0.362+0.674+0.895+0.996-0.962-0.798);
\draw[dashed](1+0.932+0.738+0.446+0.092+0.274+0.603+0.850,0+0.362+0.674+0.895+0.996-0.962-0.798-0.527)--(1+0.932+0.738+0.446+0.092+0.274+0.603+0.850+0.575,0+0.362+0.674+0.895+0.996-0.962-0.798-0.527+0.357);
\draw[dashed](1+0.932+0.738+0.446+0.092+0.274+0.603+0.850+0.575,0+0.362+0.674+0.895+0.996-0.962-0.798-0.527+0.357)--(1+0.932+0.738+0.446+0.092+0.274+0.603+0.850+0.983,0+0.362+0.674+0.895+0.996-0.962-0.798-0.527-0.184);
\draw[dashed](11.836-0.5,0.457)--(11.836-1,0);
\draw[dashed](11.836-1,0)--(11.836-1-0.302,0+0.606);
\draw[dashed](11.836-1-0.302,0+0.606)--(11.836-1-0.932,0+0.362);
\draw[dashed](11.836-1-0.932-0.738,0+0.362+0.674)--(11.836-1-0.932-0.738+0.185,0+0.362+0.674+0.651);
\draw[dashed](11.836-1-0.932-0.738+0.185,0+0.362+0.674+0.651)--(11.836-1-0.932-0.738-0.446,0+0.362+0.674+0.895);
\draw[dashed](11.836-1-0.932-0.738-0.446,0+0.362+0.674+0.895)--(11.836-1-0.932-0.738-0.44+0.408,0+0.362+0.674+0.895+0.540);
\draw[dashed](11.836-1-0.932-0.738-0.446-0.092,0+0.362+0.674+0.895+0.996)--(11.836-1-0.932-0.738-0.446-0.092-0.575,0+0.362+0.674+0.895+0.996-0.357);
\draw[dashed](11.836-1-0.932-0.738-0.446-0.092-0.575,0+0.362+0.674+0.895+0.996-0.357)--(11.836-1-0.932-0.738-0.446-0.092-0.274,0+0.362+0.674+0.895+0.996-0.962);
\draw[dashed](11.836-1-0.932-0.738-0.446-0.092-0.274,0+0.362+0.674+0.895+0.996-0.962)--(11.836-1-0.932-0.738-0.446-0.092-0.274-0.665,0+0.362+0.674+0.895+0.996-0.962-0.125);
\draw[dashed](11.836-1-0.932-0.738-0.446-0.092-0.274-0.665,0+0.362+0.674+0.895+0.996-0.962-0.125)--(11.836-1-0.932-0.738-0.446-0.092-0.274-0.603,0+0.362+0.674+0.895+0.996-0.962-0.798);
\draw[dashed](11.836-1-0.932-0.738-0.446-0.092-0.274-0.603-0.850,0+0.362+0.674+0.895+0.996-0.962-0.798-0.527)--(11.836-1-0.932-0.738-0.446-0.092-0.274-0.603-0.850-0.575,0+0.362+0.674+0.895+0.996-0.962-0.798-0.527+0.357);
\draw[dashed](11.836-1-0.932-0.738-0.446-0.092-0.274-0.603-0.850-0.575,0+0.362+0.674+0.895+0.996-0.962-0.798-0.527+0.357)--(11.836-1-0.932-0.738-0.446-0.092-0.274-0.603-0.850-0.983,0+0.362+0.674+0.895+0.996-0.962-0.798-0.527-0.184);
\end{tikzpicture}
\caption{The open sets $V,S_0(V),\cdots,S_m(V)$ and geometrical relation for $m\equiv1$ mod $4$ where $m\ge4$.}\label{1}
\end{figure}

\begin{figure}[H]
\begin{tikzpicture}[scale = 15/12.52]
\draw(0.3,0) arc (0:40:0.3);
\draw(1.4,0) arc (0:20:0.4);
\draw(1+0.940+0.766+0.5+0.174-0.07,0+0.342+0.643+0.866+0.985-0.4) arc (-90:-70:0.4);
\draw node[below]at(0,0){$0$};
\draw node[below]at(12.52,0){$1$};
\draw node[font=\tiny]at(0.1,0.4){$\frac{(m-2)\pi}{4m}$};
\draw node[font=\tiny]at(1.5,-0.2){$\frac{2\pi}{m}$};
\draw node[font=\tiny]at(1+0.940+0.766+0.5+0.174-0,0+0.342+0.643+0.866+0.985-0.8){$\frac{2\pi}{m}$};
\draw[-](0,0)--(1,0);
\draw[-](1,0)--(1+0.940,0+0.342);
\draw node[outer sep=0pt,font=\tiny]at(1+0.940+0.1915,0+0.342+0.16075){.};
\draw node[outer sep=0pt,font=\tiny]at(1+0.940+0.1915+0.1915,0+0.342+0.16075+0.16075){.};
\draw node[outer sep=0pt,font=\tiny]at(1+0.940+0.1915+0.1915+0.1915,0+0.342+0.16075+0.16075+0.16075){.};
\draw[-](1+0.940+0.766,0+0.342+0.643)--(1+0.940+0.766+0.5,0+0.342+0.643+0.866);
\draw[-](1+0.940+0.766+0.5,0+0.342+0.643+0.866)--(1+0.940+0.766+0.5+0.174,0+0.342+0.643+0.866+0.985);
\draw[-](1+0.940+0.766+0.5+0.174,0+0.342+0.643+0.866+0.985)--(1+0.940+0.766+0.5+0.174+0.174,0+0.342+0.643+0.866);
\draw[-](1+0.940+0.766+0.5+0.174+0.174,0+0.342+0.643+0.866)--(1+0.940+0.766+0.5+0.174+0.174+0.5,0+0.342+0.643);
\draw node[outer sep=0pt,font=\tiny]at(1+0.940+0.766+0.5+0.174+0.174+0.5+0.1915,0+0.342+0.643-0.16075){.};
\draw node[outer sep=0pt,font=\tiny]at(1+0.940+0.766+0.5+0.174+0.174+0.5+0.1915+0.1915,0+0.342+0.643-0.16075-0.16075){.};
\draw node[outer sep=0pt,font=\tiny]at(1+0.940+0.766+0.5+0.174+0.174+0.5+0.1915+0.1915+0.1915,0+0.342+0.643-0.16075-0.16075-0.16075){.};
\draw[-](1+0.940+0.766+0.5+0.174+0.174+0.5+0.766,0+0.342)--(1+0.940+0.766+0.5+0.174+0.174+0.5+0.766+0.940,0);
\draw[-](1+0.940+0.766+0.5+0.174+0.174+0.5+0.766+0.940,0)--(1+0.940+0.766+0.5+0.174+0.174+0.5+0.766+0.940+1,0);
\draw[-](1+0.940+0.766+0.5+0.174+0.174+0.5+0.766+0.940+1,0)--(1+0.940+0.766+0.5+0.174+0.174+0.5+0.766+0.940+1+0.940,0+0.342);
\draw node[outer sep=0pt,font=\tiny]at(1+0.940+0.766+0.5+0.174+0.174+0.5+0.766+0.940+1+0.940+0.1915,0+0.342+0.16075){.};
\draw node[outer sep=0pt,font=\tiny]at(1+0.940+0.766+0.5+0.174+0.174+0.5+0.766+0.940+1+0.940+0.1915+0.1915,0+0.342+0.16075+0.16075){.};
\draw node[outer sep=0pt,font=\tiny]at(1+0.940+0.766+0.5+0.174+0.174+0.5+0.766+0.940+1+0.940+0.1915+0.1915+0.1915,0+0.342+0.16075+0.16075+0.16075){.};
\draw[-](1+0.940+0.766+0.5+0.174+0.174+0.5+0.766+0.940+1+0.940+0.766,0+0.342+0.643)--(1+0.940+0.766+0.5+0.174+0.174+0.5+0.766+0.940+1+0.940+0.766+0.5,0+0.342+0.643+0.866);
\draw[-](1+0.940+0.766+0.5+0.174+0.174+0.5+0.766+0.940+1+0.940+0.766+0.5,0+0.342+0.643+0.866)--(1+0.940+0.766+0.5+0.174+0.174+0.5+0.766+0.940+1+0.940+0.766+0.5+0.174,0+0.342+0.643+0.866+0.985);
\draw[-](1+0.940+0.766+0.5+0.174+0.174+0.5+0.766+0.940+1+0.940+0.766+0.5+0.174,0+0.342+0.643+0.866+0.985)--(1+0.940+0.766+0.5+0.174+0.174+0.5+0.766+0.940+1+0.940+0.766+0.5+0.174+0.174,0+0.342+0.643+0.866);
\draw[-](1+0.940+0.766+0.5+0.174+0.174+0.5+0.766+0.940+1+0.940+0.766+0.5+0.174+0.174,0+0.342+0.643+0.866)--(1+0.940+0.766+0.5+0.174+0.174+0.5+0.766+0.940+1+0.940+0.766+0.5+0.174+0.174+0.5,0+0.342+0.643);
\draw node[outer sep=0pt,font=\tiny]at(1+0.940+0.766+0.5+0.174+0.174+0.5+0.766+0.940+1+0.940+0.766+0.5+0.174+0.174+0.5+0.1915,0+0.342+0.643-0.16075){.};
\draw node[outer sep=0pt,font=\tiny]at(1+0.940+0.766+0.5+0.174+0.174+0.5+0.766+0.940+1+0.940+0.766+0.5+0.174+0.174+0.5+0.1915+0.1915,0+0.342+0.643-0.16075-0.16075){.};
\draw node[outer sep=0pt,font=\tiny]at(1+0.940+0.766+0.5+0.174+0.174+0.5+0.766+0.940+1+0.940+0.766+0.5+0.174+0.174+0.5+0.1915+0.1915+0.1915,0+0.342+0.643-0.16075-0.16075-0.16075){.};
\draw[-](1+0.940+0.766+0.5+0.174+0.174+0.5+0.766+0.940+1+0.940+0.766+0.5+0.174+0.174+0.5+0.766,0+0.342)--(1+0.940+0.766+0.5+0.174+0.174+0.5+0.766+0.940+1+0.940+0.766+0.5+0.174+0.174+0.5+0.766+0.940,0);
\draw[-](1+0.940+0.766+0.5+0.174+0.174+0.5+0.766+0.940+1+0.940+0.766+0.5+0.174+0.174+0.5+0.766+0.940,0)--(1+0.940+0.766+0.5+0.174+0.174+0.5+0.766+0.940+1+0.940+0.766+0.5+0.174+0.174+0.5+0.766+0.940+1,0);
\draw[dashed](0.5,0.420)--(1,0);
\draw[dashed](1,0)--(1+0.3265,0.5655);
\draw[dashed](1+0.3265,0.5655)--(1+0.940,0+0.342);
\draw[dashed](1+0.940+0.766,0+0.342+0.643)--(1+0.940+0.766-0.113,0+0.342+0.643+0.643);
\draw[dashed](1+0.940+0.766-0.113,0+0.342+0.643+0.643)--(1+0.940+0.766+0.5,0+0.342+0.643+0.866);
\draw[dashed](1+0.940+0.766+0.5,0+0.342+0.643+0.866)--(1+0.940+0.766+0.5-0.3265,0+0.342+0.643+0.866+0.5655);
\draw[dashed](1+0.940+0.766+0.5+0.174,0+0.342+0.643+0.866+0.985)--(1+0.940+0.766+0.5+0.174+0.5,0+0.342+0.643+0.866+0.985-0.420);
\draw[dashed](1+0.940+0.766+0.5+0.174+0.5,0+0.342+0.643+0.866+0.985-0.420)--(1+0.940+0.766+0.5+0.174+0.174,0+0.342+0.643+0.866);
\draw[dashed](1+0.940+0.766+0.5+0.174+0.174,0+0.342+0.643+0.866)--(1+0.940+0.766+0.5+0.174+0.174+0.614,0+0.342+0.643+0.866-0.223);
\draw[dashed](1+0.940+0.766+0.5+0.174+0.174+0.614,0+0.342+0.643+0.866-0.223)--(1+0.940+0.766+0.5+0.174+0.174+0.5,0+0.342+0.643);
\draw[dashed](1+0.940+0.766+0.5+0.174+0.174+0.5+0.766,0+0.342)--(1+0.940+0.766+0.5+0.174+0.174+0.5+0.766+0.614,0+0.342+0.223);
\draw[dashed](1+0.940+0.766+0.5+0.174+0.174+0.5+0.766+0.614,0+0.342+0.223)--(1+0.940+0.766+0.5+0.174+0.174+0.5+0.766+0.940,0);
\draw[dashed](12.52-0.5,0.420)--(12.52-1,0);
\draw[dashed](12.52-1,0)--(12.52-1-0.3265,0.5655);
\draw[dashed](12.52-1-0.3265,0.5655)--(12.52-1-0.940,0+0.342);
\draw[dashed](12.52-1-0.940-0.766,0+0.342+0.643)--(12.52-1-0.940-0.766+0.113,0+0.342+0.643+0.643);
\draw[dashed](12.52-1-0.940-0.766+0.113,0+0.342+0.643+0.643)--(12.52-1-0.940-0.766-0.5,0+0.342+0.643+0.866);
\draw[dashed](12.52-1-0.940-0.766-0.5,0+0.342+0.643+0.866)--(12.52-1-0.940-0.766-0.5+0.3265,0+0.342+0.643+0.866+0.5655);
\draw[dashed](12.52-1-0.940-0.766-0.5-0.174,0+0.342+0.643+0.866+0.985)--(12.52-1-0.940-0.766-0.5-0.174-0.5,0+0.342+0.643+0.866+0.985-0.420);
\draw[dashed](12.52-1-0.940-0.766-0.5-0.174-0.5,0+0.342+0.643+0.866+0.985-0.420)--(12.52-1-0.940-0.766-0.5-0.174-0.174,0+0.342+0.643+0.866);
\draw[dashed](12.52-1-0.940-0.766-0.5-0.174-0.174,0+0.342+0.643+0.866)--(12.52-1-0.940-0.766-0.5-0.174-0.174-0.614,0+0.342+0.643+0.866-0.223);
\draw[dashed](12.52-1-0.940-0.766-0.5-0.174-0.174-0.614,0+0.342+0.643+0.866-0.223)--(12.52-1-0.940-0.766-0.5-0.174-0.174-0.5,0+0.342+0.643);
\draw[dashed](12.52-1-0.940-0.766-0.5-0.174-0.174-0.5-0.766,0+0.342)--(12.52-1-0.940-0.766-0.5-0.174-0.174-0.5-0.766-0.614,0+0.342+0.223);
\draw[dashed](12.52-1-0.940-0.766-0.5-0.174-0.174-0.5-0.766-0.614,0+0.342+0.223)--(12.52-1-0.940-0.766-0.5-0.174-0.174-0.5-0.766-0.940,0);
\draw[dashed](12.52-1-0.940-0.766-0.5-0.174-0.174-0.5-0.766-0.940,0)--(12.52-1-0.940-0.766-0.5-0.174-0.174-0.5-0.766-0.940-0.5,0+0.420);
\draw[dashed](12.52-1-0.940-0.766-0.5-0.174-0.174-0.5-0.766-0.940-0.5,0+0.420)--(12.52-1-0.940-0.766-0.5-0.174-0.174-0.5-0.766-0.940-1,0);
\draw[dashed](1,0)--(5.76,0);
\draw[dashed](6.76,0)--(11.52,0);
\draw[dashed](0,0)--(6.26,5.2527637);
\draw[dashed](6.26,5.2527637)--(12.52,0);
\end{tikzpicture}
\caption{The open sets $V,S_0(V),\cdots,S_m(V)$ and geometrical relation for $m\equiv2$ mod $4$ where $m\ge4$.}\label{2}
\end{figure}

\begin{figure}[H]
\begin{tikzpicture}[scale = 15/13.11]
\draw(0.3,0) arc (0:37.9:0.3);
\draw(1.8,0) arc (0:18.95:0.8);
\draw(1.5,0) arc (0:-4.737:1.5);
\draw(1+0.946+0.789+0.548+0.245-0.18,0+0.324+0.614+0.837+0.969-0.7) arc (-98:-78:0.7);
\draw node[below]at(0,0){$0$};
\draw node[below]at(13.11,0){$1$};
\draw node[font=\tiny]at(0.1,0.4){$\frac{(m-3)\pi}{4m}$};
\draw node[font=\tiny]at(2.15,0.2){$\frac{2\pi}{m}$};
\draw node[font=\tiny]at(1.65,-0.35){$\frac{\pi}{2m}$};
\draw node[font=\tiny]at(1+0.946+0.789+0.548+0.245-0.075,0+0.324+0.614+0.837+0.969-1){$\frac{2\pi}{m}$};
\draw[dashed](1,0)--(12.11,0);
\draw[-](0,0)--(1,0);
\draw[-](1,0)--(1+0.946,0+0.324);
\draw node[outer sep=0pt,font=\tiny]at(1+0.946+0.19725,0+0.324+0.1535){.};
\draw node[outer sep=0pt,font=\tiny]at(1+0.946+0.19725+0.19725,0+0.324+0.1535+0.1535){.};
\draw node[outer sep=0pt,font=\tiny]at(1+0.946+0.19725+0.19725+0.19725,0+0.324+0.1535+0.1535+0.1535){.};
\draw[-](1+0.946+0.789,0+0.324+0.614)--(1+0.946+0.789+0.548,0+0.324+0.614+0.837);
\draw[-](1+0.946+0.789+0.548,0+0.324+0.614+0.837)--(1+0.946+0.789+0.548+0.245,0+0.324+0.614+0.837+0.969);
\draw[-](1+0.946+0.789+0.548+0.245,0+0.324+0.614+0.837+0.969)--(1+0.946+0.789+0.548+0.245+0.082,0+0.324+0.614+0.837+0.969-0.997);
\draw[-](1+0.946+0.789+0.548+0.245+0.082,0+0.324+0.614+0.837+0.969-0.997)--(1+0.946+0.789+0.548+0.245+0.082+0.402,0+0.324+0.614+0.837+0.969-0.997-0.916);
\draw node[outer sep=0pt,font=\tiny]at(1+0.946+0.789+0.548+0.245+0.082+0.402+0.16925,0+0.324+0.614+0.837+0.969-0.997-0.916-0.184){.};
\draw node[outer sep=0pt,font=\tiny]at(1+0.946+0.789+0.548+0.245+0.082+0.402+0.16925+0.16925,0+0.324+0.614+0.837+0.969-0.997-0.916-0.184-0.184){.};
\draw node[outer sep=0pt,font=\tiny]at(1+0.946+0.789+0.548+0.245+0.082+0.402+0.16925+0.16925+0.16925,0+0.324+0.614+0.837+0.969-0.997-0.916-0.184-0.184-0.184){.};
\draw[-](1+0.946+0.789+0.548+0.245+0.082+0.402+0.677,0+0.324+0.614+0.837+0.969-0.997-0.916-0.736)--(1+0.946+0.789+0.548+0.245+0.082+0.402+0.677+0.880,0+0.324+0.614+0.837+0.969-0.997-0.916-0.736-0.476);
\draw[-](1+0.946+0.789+0.548+0.245+0.082+0.402+0.677+0.880,0+0.324+0.614+0.837+0.969-0.997-0.916-0.736-0.476)--(1+0.946+0.789+0.548+0.245+0.082+0.402+0.677+0.880+0.986,0+0.324+0.614+0.837+0.969-0.997-0.916-0.736-0.476-0.165);
\draw[-](1+0.946+0.789+0.548+0.245+0.082+0.402+0.677+0.880+0.986,0+0.324+0.614+0.837+0.969-0.997-0.916-0.736-0.476-0.165)--(1+0.946+0.789+0.548+0.245+0.082+0.402+0.677+0.880+0.986+0.986,0+0.324+0.614+0.837+0.969-0.997-0.916-0.736-0.476);
\draw[-](1+0.946+0.789+0.548+0.245+0.082+0.402+0.677+0.880+0.986+0.986,0+0.324+0.614+0.837+0.969-0.997-0.916-0.736-0.476)--(1+0.946+0.789+0.548+0.245+0.082+0.402+0.677+0.880+0.986+0.986+0.880,0+0.324+0.614+0.837+0.969-0.997-0.916-0.736);
\draw node[outer sep=0pt,font=\tiny]at(1+0.946+0.789+0.548+0.245+0.082+0.402+0.677+0.880+0.986+0.986+0.880+0.16925,0+0.324+0.614+0.837+0.969-0.997-0.916-0.736+0.184){.};
\draw node[outer sep=0pt,font=\tiny]at(1+0.946+0.789+0.548+0.245+0.082+0.402+0.677+0.880+0.986+0.986+0.880+0.16925+0.16925,0+0.324+0.614+0.837+0.969-0.997-0.916-0.736+0.184+0.184){.};
\draw node[outer sep=0pt,font=\tiny]at(1+0.946+0.789+0.548+0.245+0.082+0.402+0.677+0.880+0.986+0.986+0.880+0.16925+0.16925+0.16925,0+0.324+0.614+0.837+0.969-0.997-0.916-0.736+0.184+0.184+0.184){.};
\draw[-](1+0.946+0.789+0.548+0.245+0.082+0.402+0.677+0.880+0.986+0.986+0.880+0.677,0+0.324+0.614+0.837+0.969-0.997-0.916)--(1+0.946+0.789+0.548+0.245+0.082+0.402+0.677+0.880+0.986+0.986+0.880+0.677+0.402,0+0.324+0.614+0.837+0.969-0.997);
\draw[-](1+0.946+0.789+0.548+0.245+0.082+0.402+0.677+0.880+0.986+0.986+0.880+0.677+0.402,0+0.324+0.614+0.837+0.969-0.997)--(1+0.946+0.789+0.548+0.245+0.082+0.402+0.677+0.880+0.986+0.986+0.880+0.677+0.402+0.082,0+0.324+0.614+0.837+0.969);
\draw[-](1+0.946+0.789+0.548+0.245+0.082+0.402+0.677+0.880+0.986+0.986+0.880+0.677+0.402+0.082,0+0.324+0.614+0.837+0.969)--(1+0.946+0.789+0.548+0.245+0.082+0.402+0.677+0.880+0.986+0.986+0.880+0.677+0.402+0.082+0.245,0+0.324+0.614+0.837);
\draw[-](1+0.946+0.789+0.548+0.245+0.082+0.402+0.677+0.880+0.986+0.986+0.880+0.677+0.402+0.082+0.245,0+0.324+0.614+0.837)--(1+0.946+0.789+0.548+0.245+0.082+0.402+0.677+0.880+0.986+0.986+0.880+0.677+0.402+0.082+0.245+0.548,0+0.324+0.614);
\draw node[outer sep=0pt,font=\tiny]at(1+0.946+0.789+0.548+0.245+0.082+0.402+0.677+0.880+0.986+0.986+0.880+0.677+0.402+0.082+0.245+0.548+0.19725,0+0.324+0.614-0.1535){.};
\draw node[outer sep=0pt,font=\tiny]at(1+0.946+0.789+0.548+0.245+0.082+0.402+0.677+0.880+0.986+0.986+0.880+0.677+0.402+0.082+0.245+0.548+0.19725+0.19725,0+0.324+0.614-0.1535-0.1535){.};
\draw node[outer sep=0pt,font=\tiny]at(1+0.946+0.789+0.548+0.245+0.082+0.402+0.677+0.880+0.986+0.986+0.880+0.677+0.402+0.082+0.245+0.548+0.19725+0.19725+0.19725,0+0.324+0.614-0.1535-0.1535-0.1535){.};
\draw[-](1+0.946+0.789+0.548+0.245+0.082+0.402+0.677+0.880+0.986+0.986+0.880+0.677+0.402+0.082+0.245+0.548+0.789,0+0.324)--(1+0.946+0.789+0.548+0.245+0.082+0.402+0.677+0.880+0.986+0.986+0.880+0.677+0.402+0.082+0.245+0.548+0.789+0.946,0);
\draw[-](1+0.946+0.789+0.548+0.245+0.082+0.402+0.677+0.880+0.986+0.986+0.880+0.677+0.402+0.082+0.245+0.548+0.789+0.946,0)--(1+0.946+0.789+0.548+0.245+0.082+0.402+0.677+0.880+0.986+0.986+0.880+0.677+0.402+0.082+0.245+0.548+0.789+0.946+1,0);
\draw[dashed](0.5,0.389)--(1,0);
\draw[dashed](1,0)--(1+0.34678,0+0.531);
\draw[dashed](1+0.34678,0+0.531)--(1+0.946,0+0.324);
\draw[dashed](1+0.946+0.789,0+0.324+0.614)--(1+0.946+0.789-0.052,0+0.324+0.614+0.632);
\draw[dashed](1+0.946+0.789-0.052,0+0.324+0.614+0.632)--(1+0.946+0.789+0.548,0+0.324+0.614+0.837);
\draw[dashed](1+0.946+0.789+0.548,0+0.324+0.614+0.837)--(1+0.946+0.789+0.548-0.255,0+0.324+0.614+0.837+0.581);
\draw[dashed](1+0.946+0.789+0.548+0.245,0+0.324+0.614+0.837+0.969)--(1+0.946+0.789+0.548+0.245+0.429,0+0.324+0.614+0.837+0.969-0.467);
\draw[dashed](1+0.946+0.789+0.548+0.245+0.429,0+0.324+0.614+0.837+0.969-0.467)--(1+0.946+0.789+0.548+0.245+0.082,0+0.324+0.614+0.837+0.969-0.997);
\draw[dashed](1+0.946+0.789+0.548+0.245+0.082,0+0.324+0.614+0.837+0.969-0.997)--(1+0.946+0.789+0.548+0.245+0.082+0.558,0+0.324+0.614+0.837+0.969-0.997-0.302);
\draw[dashed](1+0.946+0.789+0.548+0.245+0.082+0.558,0+0.324+0.614+0.837+0.969-0.997-0.302)--(1+0.946+0.789+0.548+0.245+0.082+0.402,0+0.324+0.614+0.837+0.969-0.997-0.916);
\draw[dashed](1+0.946+0.789+0.548+0.245+0.082+0.402+0.677,0+0.324+0.614+0.837+0.969-0.997-0.916-0.736)--(1+0.946+0.789+0.548+0.245+0.082+0.402+0.677+0.625,0+0.324+0.614+0.837+0.969-0.997-0.916-0.736+0.105);
\draw[dashed](1+0.946+0.789+0.548+0.245+0.082+0.402+0.677+0.625,0+0.324+0.614+0.837+0.969-0.997-0.916-0.736+0.105)--(1+0.946+0.789+0.548+0.245+0.082+0.402+0.677+0.880,0+0.324+0.614+0.837+0.969-0.997-0.916-0.736-0.476);
\draw[dashed](1+0.946+0.789+0.548+0.245+0.082+0.402+0.677+0.880,0+0.324+0.614+0.837+0.969-0.997-0.916-0.736-0.476)--(1+0.946+0.789+0.548+0.245+0.082+0.402+0.677+0.880+0.558,0+0.324+0.614+0.837+0.969-0.997-0.916-0.736-0.476+0.302);
\draw[dashed](1+0.946+0.789+0.548+0.245+0.082+0.402+0.677+0.880+0.558,0+0.324+0.614+0.837+0.969-0.997-0.916-0.736-0.476+0.302)--(1+0.946+0.789+0.548+0.245+0.082+0.402+0.677+0.880+0.986,0+0.324+0.614+0.837+0.969-0.997-0.916-0.736-0.476-0.165);
\draw[dashed](13.11-0.5,0.389)--(13.11-1,0);
\draw[dashed](13.11-1,0)--(13.11-1-0.34678,0+0.531);
\draw[dashed](13.11-1-0.34678,0+0.531)--(13.11-1-0.946,0+0.324);
\draw[dashed](13.11-1-0.946-0.789,0+0.324+0.614)--(13.11-1-0.946-0.789+0.052,0+0.324+0.614+0.632);
\draw[dashed](13.11-1-0.946-0.789+0.052,0+0.324+0.614+0.632)--(13.11-1-0.946-0.789-0.548,0+0.324+0.614+0.837);
\draw[dashed](13.11-1-0.946-0.789-0.548,0+0.324+0.614+0.837)--(13.11-1-0.946-0.789-0.548+0.255,0+0.324+0.614+0.837+0.581);
\draw[dashed](13.11-1-0.946-0.789-0.548-0.245,0+0.324+0.614+0.837+0.969)--(13.11-1-0.946-0.789-0.548-0.245-0.429,0+0.324+0.614+0.837+0.969-0.467);
\draw[dashed](13.11-1-0.946-0.789-0.548-0.245-0.429,0+0.324+0.614+0.837+0.969-0.467)--(13.11-1-0.946-0.789-0.548-0.245-0.082,0+0.324+0.614+0.837+0.969-0.997);
\draw[dashed](13.11-1-0.946-0.789-0.548-0.245-0.082,0+0.324+0.614+0.837+0.969-0.997)--(13.11-1-0.946-0.789-0.548-0.245-0.082-0.558,0+0.324+0.614+0.837+0.969-0.997-0.302);
\draw[dashed](13.11-1-0.946-0.789-0.548-0.245-0.082-0.558,0+0.324+0.614+0.837+0.969-0.997-0.302)--(13.11-1-0.946-0.789-0.548-0.245-0.082-0.402,0+0.324+0.614+0.837+0.969-0.997-0.916);
\draw[dashed](13.11-1-0.946-0.789-0.548-0.245-0.082-0.402-0.677,0+0.324+0.614+0.837+0.969-0.997-0.916-0.736)--(13.11-1-0.946-0.789-0.548-0.245-0.082-0.402-0.677-0.625,0+0.324+0.614+0.837+0.969-0.997-0.916-0.736+0.105);
\draw[dashed](13.11-1-0.946-0.789-0.548-0.245-0.082-0.402-0.677-0.625,0+0.324+0.614+0.837+0.969-0.997-0.916-0.736+0.105)--(13.11-1-0.946-0.789-0.548-0.245-0.082-0.402-0.677-0.880,0+0.324+0.614+0.837+0.969-0.997-0.916-0.736-0.476);
\draw[dashed](13.11-1-0.946-0.789-0.548-0.245-0.082-0.402-0.677-0.880,0+0.324+0.614+0.837+0.969-0.997-0.916-0.736-0.476)--(13.11-1-0.946-0.789-0.548-0.245-0.082-0.402-0.677-0.880-0.558,0+0.324+0.614+0.837+0.969-0.997-0.916-0.736-0.476+0.302);
\draw[dashed](13.11-1-0.946-0.789-0.548-0.245-0.082-0.402-0.677-0.880-0.558,0+0.324+0.614+0.837+0.969-0.997-0.916-0.736-0.476+0.302)--(13.11-1-0.946-0.789-0.548-0.245-0.082-0.402-0.677-0.880-0.986,0+0.324+0.614+0.837+0.969-0.997-0.916-0.736-0.476-0.165);
\draw[dashed](0,0)--(6.555,5.103);
\draw[dashed](6.555,5.103)--(13.11,0);
\draw[dashed](0,0)--(1+0.946+0.789+0.548+0.245+0.082+0.402+0.677+0.880+0.986,0+0.324+0.614+0.837+0.969-0.997-0.916-0.736-0.476-0.165);
\draw[dashed](1+0.946+0.789+0.548+0.245+0.082+0.402+0.677+0.880+0.986,0+0.324+0.614+0.837+0.969-0.997-0.916-0.736-0.476-0.165)--(13.11,0);
\draw[dashed](0.5,-0.04143)--(1,0);
\draw[dashed](1,0)--(1+0.486,0+0.123);
\draw[dashed](1+0.486,0+0.123)--(1+0.946,0+0.324);
\draw[dashed](1+0.946+0.789,0+0.324+0.614)--(1+0.946+0.789+0.308,0+0.324+0.614+0.396);
\draw[dashed](1+0.946+0.789+0.308,0+0.324+0.614+0.396)--(1+0.946+0.789+0.548,0+0.324+0.614+0.837);
\draw[dashed](1+0.946+0.789+0.548,0+0.324+0.614+0.837)--(1+0.946+0.789+0.548+0.163,0+0.324+0.614+0.837+0.4745);
\draw[dashed](1+0.946+0.789+0.548+0.163,0+0.324+0.614+0.837+0.4745)--(1+0.946+0.789+0.548+0.245,0+0.324+0.614+0.837+0.969);
\draw[dashed](1+0.946+0.789+0.548+0.245,0+0.324+0.614+0.837+0.969)--(1+0.946+0.789+0.548+0.245,0+0.324+0.614+0.837+0.969-0.5017);
\draw[dashed](1+0.946+0.789+0.548+0.245,0+0.324+0.614+0.837+0.969-0.5017)--(1+0.946+0.789+0.548+0.245+0.082,0+0.324+0.614+0.837+0.969-0.997);
\draw[dashed](1+0.946+0.789+0.548+0.245+0.082,0+0.324+0.614+0.837+0.969-0.997)--(1+0.946+0.789+0.548+0.245+0.082+0.1625,0+0.324+0.614+0.837+0.969-0.997-0.474651);
\draw[dashed](1+0.946+0.789+0.548+0.245+0.082+0.1625,0+0.324+0.614+0.837+0.969-0.997-0.474651)--(1+0.946+0.789+0.548+0.245+0.082+0.402,0+0.324+0.614+0.837+0.969-0.997-0.916);
\draw[dashed](1+0.946+0.789+0.548+0.245+0.082+0.402+0.677,0+0.324+0.614+0.837+0.969-0.997-0.916-0.736)--(1+0.946+0.789+0.548+0.245+0.082+0.402+0.677++0.42,0+0.324+0.614+0.837+0.969-0.997-0.916-0.736-0.2744);
\draw[dashed](1+0.946+0.789+0.548+0.245+0.082+0.402+0.677++0.42,0+0.324+0.614+0.837+0.969-0.997-0.916-0.736-0.2744)--(1+0.946+0.789+0.548+0.245+0.082+0.402+0.677+0.880,0+0.324+0.614+0.837+0.969-0.997-0.916-0.736-0.476);
\draw[dashed](1+0.946+0.789+0.548+0.245+0.082+0.402+0.677+0.880,0+0.324+0.614+0.837+0.969-0.997-0.916-0.736-0.476)--(1+0.946+0.789+0.548+0.245+0.082+0.402+0.677+0.880+0.486,0+0.324+0.614+0.837+0.969-0.997-0.916-0.736-0.476-0.123);
\draw[dashed](13.11-0.5,-0.04143)--(13.11-1,0);
\draw[dashed](13.11-1,0)--(13.11-1-0.486,0+0.123);
\draw[dashed](13.11-1-0.486,0+0.123)--(13.11-1-0.946,0+0.324);
\draw[dashed](13.11-1-0.946-0.789,0+0.324+0.614)--(13.11-1-0.946-0.789-0.308,0+0.324+0.614+0.396);
\draw[dashed](13.11-1-0.946-0.789-0.308,0+0.324+0.614+0.396)--(13.11-1-0.946-0.789-0.548,0+0.324+0.614+0.837);
\draw[dashed](13.11-1-0.946-0.789-0.548,0+0.324+0.614+0.837)--(13.11-1-0.946-0.789-0.548-0.163,0+0.324+0.614+0.837+0.4745);
\draw[dashed](13.11-1-0.946-0.789-0.548-0.163,0+0.324+0.614+0.837+0.4745)--(13.11-1-0.946-0.789-0.548-0.245,0+0.324+0.614+0.837+0.969);
\draw[dashed](13.11-1-0.946-0.789-0.548-0.245,0+0.324+0.614+0.837+0.969)--(13.11-1-0.946-0.789-0.548-0.245,0+0.324+0.614+0.837+0.969-0.5017);
\draw[dashed](13.11-1-0.946-0.789-0.548-0.245,0+0.324+0.614+0.837+0.969-0.5017)--(13.11-1-0.946-0.789-0.548-0.245-0.082,0+0.324+0.614+0.837+0.969-0.997);
\draw[dashed](13.11-1-0.946-0.789-0.548-0.245-0.082,0+0.324+0.614+0.837+0.969-0.997)--(13.11-1-0.946-0.789-0.548-0.245-0.082-0.1625,0+0.324+0.614+0.837+0.969-0.997-0.474651);
\draw[dashed](13.11-1-0.946-0.789-0.548-0.245-0.082-0.1625,0+0.324+0.614+0.837+0.969-0.997-0.474651)--(13.11-1-0.946-0.789-0.548-0.245-0.082-0.402,0+0.324+0.614+0.837+0.969-0.997-0.916);
\draw[dashed](13.11-1-0.946-0.789-0.548-0.245-0.082-0.402-0.677,0+0.324+0.614+0.837+0.969-0.997-0.916-0.736)--(13.11-1-0.946-0.789-0.548-0.245-0.082-0.402-0.677-0.42,0+0.324+0.614+0.837+0.969-0.997-0.916-0.736-0.2744);
\draw[dashed](13.11-1-0.946-0.789-0.548-0.245-0.082-0.402-0.677-0.42,0+0.324+0.614+0.837+0.969-0.997-0.916-0.736-0.2744)--(13.11-1-0.946-0.789-0.548-0.245-0.082-0.402-0.677-0.880,0+0.324+0.614+0.837+0.969-0.997-0.916-0.736-0.476);
\draw[dashed](13.11-1-0.946-0.789-0.548-0.245-0.082-0.402-0.677-0.880,0+0.324+0.614+0.837+0.969-0.997-0.916-0.736-0.476)--(13.11-1-0.946-0.789-0.548-0.245-0.082-0.402-0.677-0.880-0.486,0+0.324+0.614+0.837+0.969-0.997-0.916-0.736-0.476-0.123);
\end{tikzpicture}
\caption{The open sets $V,S_0(V),\cdots,S_m(V)$ and geometrical relation for $m\equiv3$ mod $4$ where $m\ge4$.}\label{3}
\end{figure}

\begin{ack}
The author thanks Professor Jean-Paul Allouche for his suggestions, and thanks the Oversea Study Program of Guangzhou Elite Project (GEP) for financial support (JY201815).
\end{ack}

\end{document}